\documentclass[oneside,a4paper,11pt]{article}
\usepackage{comment}
\usepackage{mathtools}
\usepackage{amsmath}
\allowdisplaybreaks[4]
\usepackage{csquotes}
\usepackage{amsthm}
\usepackage{amssymb}
\usepackage{mathrsfs}
\usepackage{color}
\usepackage{bm}
\usepackage{fancyhdr}
\pagestyle{plain} 
\usepackage{geometry}
\usepackage{hyperref}
\usepackage{enumerate}
\usepackage{graphicx}
\usepackage{subcaption}
\usepackage{float}

\theoremstyle{definition}
\newtheorem{thm}{Theorem}[section]
\newtheorem{cor}[thm]{Corollary}
\newtheorem{lem}[thm]{Lemma}

\newtheorem{defn}[thm]{Definition}
\newtheorem{rem}[thm]{Remark}
\newtheorem{conj}[thm]{Conjecture}

\numberwithin{equation}{section}

\newcommand{\pp}[2]{\frac{\partial#1}{\partial#2}}

\newcommand{\vol}{\mathrm{Vol}}
\title{The rigidity of Doyle circle packings on the infinite hexagonal triangulation}
\author{
        Bobo Hua~~
	Puchun Zhou
}

\date{}
\usepackage{fancyhdr}
\pagestyle{fancy}
\fancyhf{}
\fancyhead{}
\fancyhead[LO]{The rigidity of Doyle circle packings on the infinite hexagonal triangulation}
\fancyfoot[C]{\thepage}

\providecommand{\classification}[1]
{
	\small	
	\textbf{Mathematics Subject Classification (2020):} #1
}

\begin{document}
	\maketitle
	\begin{abstract}
 
Peter Doyle conjectured that locally univalent circle packings on the hexagonal lattice only consist of regular hexagonal packings and Doyle spirals, which is called the Doyle conjecture. In this paper, we prove a rigidity theorem for Doyle spirals in the class of infinite circle packings on the hexagonal lattice whose radii ratios of adjacent circles have a uniform bound. This gives a partial answer to the Doyle conjecture. Based on a new observation that the logarithmic of the radii ratio of adjacent circles is a weighted discrete harmonic function, we prove the result via the Liouville theorem of discrete harmonic functions.
\\
		\classification{52C25,52C26,57M50
		}	
		
	\end{abstract}
\section{Introduction}\label{sec:1}

A circle packing is a configuration consisting of tangent circles, which Thurston rediscovered in \cite{thurston1980geometry} for approximating Riemann mapping with discrete patterns \cite{Rodin_Sullivan,MR1395721,MR1638772}. Circle packings build up a bridge between combinatorics and geometry. It is a powerful tool in graph theory, discrete geometry, topology, and probability theory etc., see \cite{MR1207210,MR1419007,spielman1996disk, MR1438463,lovasz1999geometric,MR1781842,MR3010812,MR3970273}.
The definition of the circle packing is stated as follows.
\begin{defn}\label{circle_packing}
Let $\mathcal{T}=(V, E, F)$ be a simplicial $2$-complex with the sets of vertices $V$, edges $E$ and faces $F$. Let $\Sigma$ be an oriented surface with constant Gaussian curvature. Let $\mathcal{P}=\{C_v\}_{v\in V}$ be a configuration consisting of circles $\{C_v\}_{v\in V}$ in $\Sigma,$ where $C_v$ is associated to the vertex $v\in V$. The configuration $\mathcal{P}$ is said to be a circle packing if the following statements hold.
\begin{enumerate}
    \item Two circles $C_v$ and $C_w$ are externally tangent in $\Sigma$ to each other whenever $\{v,w\}$ is an edge of $\mathcal{T}$.
    \item Three circles $C_{v_1},C_{v_2},C_{v_3}$ form a positive oriented triple in $\mathcal{T}$ whenever $(v_1,v_2,v_3)$ forms a positive oriented face of $\mathcal{T}.$
\end{enumerate}
 
\end{defn}

 Let $\mathcal{P}$ be a circle packing. For an interior vertex $v$, we denote by $N(v)=\{w\in V:w\sim v\}$ neighbors of the vertex $v.$ A \textbf{flower} $\mathcal{F}(v)$ of a vertex $v$ in $\mathcal{P}$ refers to the set of circles consisting of $C_v$ and circles associated to neighbors of $v.$
We denote by $\mathrm{Carr}(\mathcal{P})$ the union of geometric triangles formed by connecting centers of triples of circles in $\mathcal{P}$ with geodesic segments.
A circle packing $\mathcal{P}$ is called \textbf{locally univalent} if for every interior
 vertex $v$, the faces in $\mathrm{Carr}(\mathcal{P})$ corresponding to the flower $\mathcal{F}(v)$ have mutually disjoint
 interiors. If all circles in $\mathcal{P}$ have disjoint interiors, we call $\mathcal{P}$ a \textbf{univalent circle packing}. See Stephenson \cite{Stephenson_intro} for definitions. We call $\mathcal{P}$ \textbf{a circle packing with univalent flowers}, if for each vertex $v\in V$, the flower $\mathcal{F}(v)$ of $\mathcal{P}$ is a univalent circle packing. One can see the difference between univalent and non-univalent flowers in Figure \ref{univalent} and Figure \ref{non-univalent}. Note that the set of univalent circle packings is a proper subset of circle packings with univalent flowers, and the latter is a proper subset of locally univalent circle packings.
 The univalence and local univalence stand for embedding and immersion of circle packings on surfaces.
\begin{figure}
     \centering
     \begin{subfigure}[c]
     {0.38\textwidth}
     \captionsetup{justification=centering}
         \centering
         \includegraphics[width=1.00\textwidth]{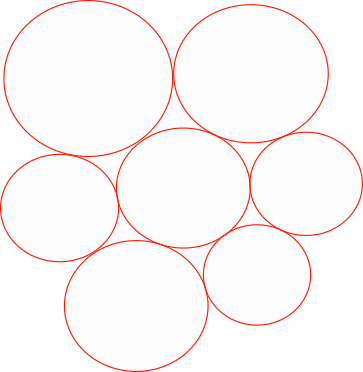}
         \caption{\small  A univalent flower.}
         \label{univalent}
     \end{subfigure}
     \begin{subfigure}[c]{0.53\textwidth}
\captionsetup{justification=centering}
         \centering
\includegraphics[width=0.65\textwidth]{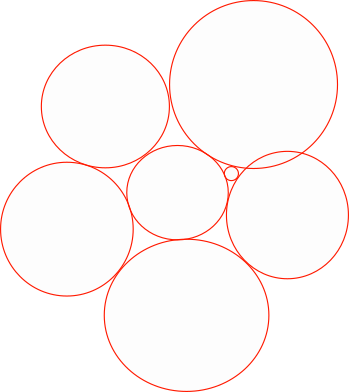}
        \caption{\small A non-univalent flower. }
         \label{non-univalent}
     \end{subfigure}
         \caption{\small Univalent and non-univalent flowers.}
        \label{abcde}
\end{figure}
In the rest of the paper, the local univalence is always assumed.

Now we consider the existence and uniqueness results of circle packings. For a finite triangulation $\mathcal{T}$ on a compact surface $\Sigma$, 
the first result of the existence and uniqueness was proven by Koebe for circle packings on the sphere in \cite{koebe1936origin}, and later the theorem was proven by Andreev and Thurston independently, see \cite{andreev1970convex1, thurston1976geometry}, which is now called the Koebe-Andreev-Thureston theorem. After that, Beardon and Stephenson extended the theorem and proved a uniformization theorem for univalent circle packings \cite{Beardon_stephenson}.
The rigidity of circle packings on triangulations with infinite triangles is an important topic. When $\mathcal{T}_H$ is a hexagonal triangulation of the complex plane $\mathbb{C}$, Rodin and Sullivan \cite{Rodin_Sullivan} proved that univalent circle packings of $\mathcal{T}_H$ consist of \textbf{regular hexagonal packings}, i.e. radii of circles are the same, as shown in Figure \ref{regular}. 
 \begin{figure}[htbp]
\centering
\includegraphics[scale=0.40]{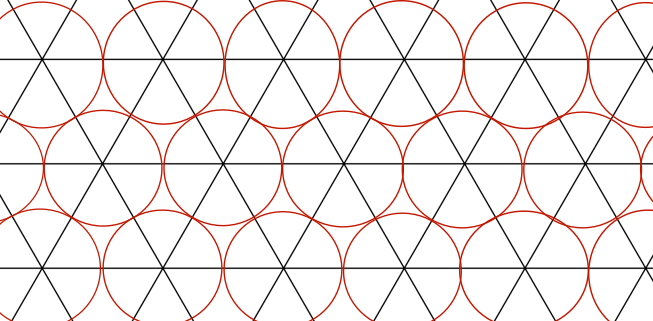}
\captionof{figure}{\small A part of a regular hexagonal packing.}
  \label{regular}
\end{figure} 
Schramm \cite{Schramm_oded} proved that for an infinite planar triangulation $\mathcal{T}$ and a univalent circle packing $\mathcal{P}$ of $\mathcal{T}$ on the sphere $\mathbb{S}^2$ (or on the unit disk $\mathbb{U}$), if $\mathbb{S}^2\backslash \mathrm{carr}(\mathcal{P})$ ($\mathbb{U}\backslash \mathrm{carr}(\mathcal{P})$) is at most countable, then $\mathcal{P}$ is unique up to m\"obius transformations. To prove the result, Schramm used delicate topological arguments on the plane. Later, He \cite{HE} proved the rigidity of locally finite disk patterns with the help of discrete extremal lengths of graphs, which is stated as follows in the case of circle packings. A uniformization theorem of univalent circle packings of infinite planar triangulations was proved by He and Schramm, see \cite{He_schramm}. 
\begin{thm}[He \cite{HE}]\label{he}
    Let $\mathcal{T}$ be an infinite triangulation of the open disk. Assume that $\mathcal{P}$ and $\mathcal{P}^*$ are univalent circle packings of $\mathcal{T}$ in $\mathbb{C}$, and $\mathrm{Carr}(\mathcal{P})=\mathbb{C}$. Then there exists an Euclidean similarity $f:\mathbb{C}\rightarrow\mathbb{C}$ such that $\mathcal{P}^*=f(\mathcal{P}).$
\end{thm}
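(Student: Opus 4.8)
The plan is to derive the statement from Schramm's rigidity theorem quoted above, by passing to the Riemann sphere $\mathbb{S}^2=\mathbb{C}\cup\{\infty\}$ and then showing that the point at infinity cannot be moved. Since $\mathcal{T}$ triangulates the open disk it is an infinite planar triangulation, and since the circles of $\mathcal{P}$ and $\mathcal{P}^*$ are bounded Euclidean circles with pairwise disjoint open disks, after fixing an orientation-preserving identification $\mathbb{C}\subset\mathbb{S}^2$ the packings $\mathcal{P}$ and $\mathcal{P}^*$ are univalent circle packings of $\mathcal{T}$ on $\mathbb{S}^2$. Let $\mathrm{carr}(\mathcal{P})$ denote the carrier formed from the closed disks together with the closed interstitial triangles; it contains $\mathrm{Carr}(\mathcal{P})$, so the hypothesis $\mathrm{Carr}(\mathcal{P})=\mathbb{C}$ forces $\mathrm{carr}(\mathcal{P})=\mathbb{C}$, i.e. on $\mathbb{S}^2$ the residual set $\mathbb{S}^2\setminus\mathrm{carr}(\mathcal{P})$ is the single point $\{\infty\}$, which is at most countable. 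Schramm's theorem then provides a M\"obius transformation $\mu$ of $\mathbb{S}^2$, orientation-preserving because both packings respect the orientation of $\mathcal{T}$, with $\mathcal{P}^*=\mu(\mathcal{P})$.

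\textbf{The point at infinity is fixed.} If $\mu(\infty)=\infty$ then $\mu|_{\mathbb{C}}$ has the form $z\mapsto az+b$ with $a\neq0$, i.e. it is an orientation-preserving Euclidean similarity $f$, and $\mathcal{P}^*=f(\mathcal{P})$, as required. So suppose $\mu(\infty)=p\in\mathbb{C}$. Being M\"obius, $\mu$ maps each disk of $\mathcal{P}$ onto the corresponding disk of $\mathcal{P}^*$ and each interstitial triangle onto the corresponding one, hence maps $\mathrm{carr}(\mathcal{P})$ onto $\mathrm{carr}(\mathcal{P}^*)$; therefore $\mathrm{carr}(\mathcal{P}^*)=\mu(\mathbb{S}^2\setminus\{\infty\})=\mathbb{S}^2\setminus\{p\}$, which contains $\infty$. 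But every circle of $\mathcal{P}^*$ is a bounded Euclidean circle and every interstitial triangle of $\mathcal{P}^*$ is bounded, so $\infty\notin\mathrm{carr}(\mathcal{P}^*)$ — a contradiction. Hence $\mu(\infty)=\infty$, completing the proof.

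\textbf{Where the difficulty lies.} In this argument essentially all the weight rests on Schramm's theorem, whose proof — the delicate planar topology of Schramm, or He's discrete-extremal-length method — is the genuinely hard input; the reduction above is soft. A self-contained argument would instead go through the radius-ratio function $s(v)=r^*_v/r_v$: (i) it suffices to prove $s$ is constant, since then fixing one pair of corresponding triangles and propagating around flowers exhibits $\mathcal{P}^*$ as the image of $\mathcal{P}$ under a Euclidean similarity; (ii) a discrete maximum principle — using that the corner angle $\alpha(r_v;r_a,r_b)$ of a tangent triple is strictly decreasing in $r_v$, strictly increasing in $r_a,r_b$, and invariant under common rescaling, together with the fact that the angle sum about every interior vertex equals $2\pi$ in both packings — shows that $s$ has no strict interior local extremum, so $s$ is constant as soon as $\sup_V s$ or $\inf_V s$ is attained; (iii) one must rule out these extremal values being approached only ``at infinity'', and this is precisely where $\mathrm{Carr}(\mathcal{P})=\mathbb{C}$ enters: it makes $\mathcal{P}$ parabolic, and a length--area (discrete extremal length) estimate then shows that a nonconstant $s$ cannot vary by only a bounded total amount across a family of pairwise disjoint separating combinatorial annuli whose moduli sum to infinity. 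Step (iii) is the real obstacle, and it is essentially the content of He's paper.
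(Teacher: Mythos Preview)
The paper does not prove this theorem. It is quoted in the introduction as a result of He \cite{HE}, with only a one-line remark that He's method uses discrete extremal length; there is no argument in the paper to compare your proposal against.

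That said, your reduction to Schramm's theorem is correct as far as it goes: once one accepts Schramm's uniqueness statement on $\mathbb{S}^2$ with at most countable residual set, the hypothesis $\mathrm{Carr}(\mathcal{P})=\mathbb{C}$ gives residual set $\{\infty\}$, Schramm yields a M\"obius map $\mu$ with $\mathcal{P}^*=\mu(\mathcal{P})$, and your argument that $\mu$ must fix $\infty$ (because $\infty$ cannot lie in the carrier of a packing by bounded Euclidean disks) is clean and correct. You are also right that this is a soft reduction and that the entire content has been pushed into Schramm's theorem, which historically \emph{preceded} He's result; He's contribution was precisely a different, more quantitative route via vertex extremal length, essentially the step (iii) you outline in your final paragraph. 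So your write-up is accurate both as a derivation and as a diagnosis of where the depth lies, but it is not a comparison with anything in this paper, because the paper supplies nothing to compare with.
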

For other rigidity results, we refer readers to \cite{MR1489142,MR3457760}.

Now we turn to locally univalent circle packings.
Uniqueness results proved by He and Schramm, however, don't work in this general setting.
Hence it is important to consider moduli spaces of locally univalent circle packings with given combinatorics, e.g. the hexagonal triangulation $\mathcal{T}_H$.
\begin{defn}
Let $H$ be a lattice on the plane $\mathbb{C}$ given by
$$H=\{v_{m,n}=m+ne^{\frac{\pi i}{3}}:m,n\in \mathbb{Z}\}.$$
The hexagonal triangulation (or lattice) $\mathcal{T}_H$ is obtained from connecting vertices pairs $\{v_{m,n},v_{m',n'}\}$ whenever $|v_{m,n}-v_{m',n'}|=1$, $\forall m,n,m',n'\in\mathbb{Z}$. We denote by $H$, $E_H$, and $F_H$ the sets of vertices, edges, and faces of $\mathcal{T}_H.$
\end{defn}
Aside from regular hexagonal packings in Figure \ref{regular}, there is another class of locally univalent circle packings on $\mathcal{T}_H$ called Doyle spirals. Doyle spirals were used to model phyllotaxis by Gerrit van Iterson in 1907 \cite{jean1983introductory}, as shown in Figure \ref{Spiral}. They were named after Doyle for his important contribution to their mathematical construction. And see \cite{doyl_study1,doyl_study2} for earlier mathematical considerations of Doyle spirals.
 \begin{figure}[htbp]
\centering
\includegraphics[scale=0.40]{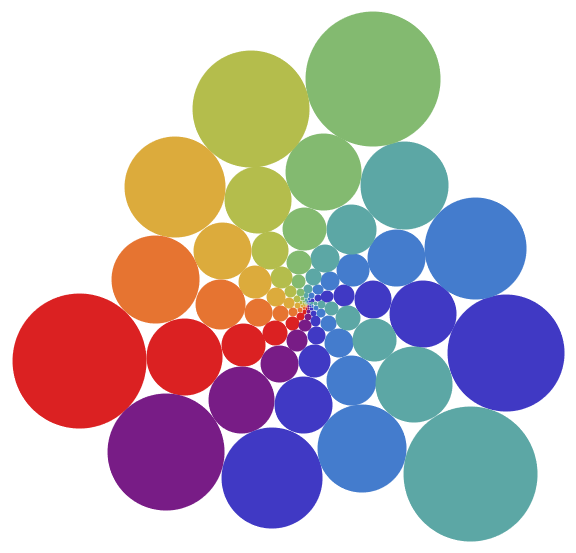}
\captionof{figure}{\small An example of Doyle spiral.}
  \label{Spiral}
\end{figure} Let $r_{m,n}$ be the radius of the circle at vertex $v_{m,n}$. Doyle spirals are defined as below.
\begin{defn}\label{doyle_spiral_defn}
   For any $r_0>0$ and $x,y\in\mathbb{R}_+$, there exists a circle packing of the hexagonal triangulation $\mathcal{T}_H$, denoted by $\mathcal{P}_{r_0,x,y}$,   with radii given by
\begin{align}\label{spiral_radii}
   r_{m,n}=r_0x^my^n,~\forall m,n\in \mathbb{Z}.
\end{align}
These circle packings determined by three parameters $r_0,~x$ and $y$ are called \textbf{Doyle spirals}.
\end{defn}
Since no other locally univalent circle packings of $\mathcal{T}_H$ can be found in numerical experiments, the following conjecture was proposed by Peter Doyle \cite[Open~Question~14.9]{Stephenson_intro}, which is one of the earliest questions in circle packing according to \cite[p.397]{MR2963949}, see also \cite{circlepattern_square,bobenko1}.
\begin{conj}\label{conj}
    There is no locally univalent circle packings for hexagonal triangulation $\mathcal{T}_H$ other than regular hexagonal packings and Doyle spirals. 
\end{conj}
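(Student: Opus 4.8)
The plan is to reduce Conjecture \ref{conj} to an a priori bound on adjacent radius ratios and then to the bounded-ratio rigidity theorem proved in this paper; since the full conjecture is open, I isolate below the single step that remains genuinely undecided. A locally univalent circle packing of $\mathcal{T}_H$ is determined up to a Euclidean similarity by its radius function $r\colon H\to\mathbb{R}_+$, and the only constraint is the angle condition at every vertex. Writing the interior angle at the center of $C_v$ in the triangle spanned by three mutually tangent circles of radii $a,b,c$ as
\begin{align}\label{angle}
\theta(a;b,c)=2\arcsin\sqrt{\frac{bc}{(a+b)(a+c)}},
\end{align}
local univalence is equivalent to $\sum_{i=1}^{6}\theta(r_v;r_{w_i},r_{w_{i+1}})=2\pi$ for every $v\in H$, where $w_1,\dots,w_6$ are the neighbours of $v$ in cyclic order (indices mod $6$). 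Putting $u=\log r$, I would first record the paper's key observation in the form: this angle-sum equation is equivalent to the assertion that $u$ is weighted discrete harmonic, i.e. there exist positive symmetric conductances $c_{vw}=c_{vw}(r)>0$, built from the local geometry, with
\begin{align}\label{harm}
\sum_{w\sim v}c_{vw}\,(u_w-u_v)=0,\qquad v\in H.
\end{align}
Under this dictionary the two families in the conjecture are transparent: regular hexagonal packings correspond to $u\equiv\mathrm{const}$, and Doyle spirals $r_{m,n}=r_0x^my^n$ correspond to $u$ being \emph{affine} in $(m,n)$. Thus Conjecture \ref{conj} becomes the Liouville-type assertion that every solution of the packing equation \eqref{harm} is affine.

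The strategy is to split the problem at the dichotomy ``bounded ratios versus unbounded ratios.'' If one knows an a priori bound $1/K\le r_w/r_v\le K$ for adjacent circles, then the conductances $c_{vw}$ in \eqref{harm} are uniformly elliptic on the hexagonal lattice, the edge log-ratios $u_w-u_v$ are bounded, and $u$ has at most linear growth; the space of linear-growth weighted-harmonic functions is then finite-dimensional and, matching dimensions against the three Doyle parameters, one concludes $u$ is affine. This is exactly the bounded-ratio rigidity theorem of the present paper, which I would invoke as a black box. Consequently the whole of Conjecture \ref{conj} reduces to a single additional input: that local univalence \emph{alone} forces the adjacent radius ratios of an infinite packing of $\mathcal{T}_H$ to be uniformly bounded. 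I would therefore devote the main effort to proving this a priori bound, which is equivalent to establishing a Liouville theorem for \eqref{harm} that tolerates degenerate conductances.

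To control the ratios I would exploit the variational structure of the angle-sum equation: it is the critical-point equation of a convex functional of $u$ (a Colin de Verdière–type energy whose first variation produces $2\pi-\sum_i\theta$), so solutions are minimizers and the $c_{vw}$ are the positive second-derivative weights of this functional. From convexity I would seek a maximum-principle or Harnack statement for the edge function $\eta_{vw}=u_w-u_v$: the aim is to show that if $\eta$ were unbounded along some sequence of vertices, then either the angle sums cannot all equal $2\pi$ (some $\theta$ in \eqref{angle} is driven toward $0$ or $\pi$ while its neighbours compensate, eventually producing a degenerate or overlapping flower contradicting local univalence), or the closedness of $\eta$ around each hexagonal face, combined with \eqref{harm}, over-determines $u$. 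A complementary route is a discrete extremal-length or parabolicity argument in the spirit of He \cite{HE} and He--Schramm \cite{He_schramm}, adapted to the \emph{immersed} carrier $\mathrm{Carr}(\mathcal{P})$ rather than an embedded one. The decisive difficulty---and precisely why the conjecture is resolved here only under a uniform bound---is this step: there is no evident geometric mechanism forbidding a locally univalent packing whose ratios drift to $0$ or $\infty$, the conductances in \eqref{harm} then degenerate, and the standard discrete Liouville theorems (which require uniform ellipticity and bounded geometry) cease to apply. Removing this hypothesis, either by the convexity and maximum-principle estimate sketched above or by a degenerate-weight Liouville theorem tailored to the hexagonal combinatorics, is the heart of the open problem.
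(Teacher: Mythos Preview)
The statement is a conjecture, not a theorem; the paper does not prove it and explicitly presents Theorem~\ref{Doyle's theorem} as a partial result. So there is no proof in the paper to compare against. Your proposal is correctly shaped as a reduction---establish an a priori bound on adjacent radius ratios, then invoke Theorem~\ref{Doyle's theorem}---and you correctly flag the a priori bound as the open step.

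Two substantive corrections. First, you misstate the paper's key observation. The paper does \emph{not} show that $u=\log r$ is weighted harmonic; what is shown (Lemma~\ref{bounded_weight}) is that the \emph{spatial difference} $D_1u_{m,n}=u_{m+1,n}-u_{m,n}$ is weighted harmonic on $(\mathcal{G}_H,\eta)$, with weights that are symmetric by Lemma~\ref{vari}(b) and uniformly bounded above by Lemma~\ref{uniform} without any hypothesis on the packing. This matters because the Liouville input actually used (Lemma~\ref{harmonic_lemma}) is for one-side-bounded harmonic functions on a recurrent network; it applies to $D_1u$ once $\inf D_1u>-\infty$, but cannot apply to $u$ itself, which grows linearly for Doyle spirals. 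Your rewriting of the angle-sum as $\sum_w c_{vw}(u_w-u_v)=0$ via a mean-value trick from the constant configuration is also problematic: the integration paths based at $v$ and at $w$ differ, so the resulting conductances need not be symmetric, and the subsequent ``dimension-matching'' heuristic fails because the conductances depend on $u$, so distinct solutions sit in distinct linear equations. Since you ultimately black-box Theorem~\ref{Doyle's theorem}, this does not break your reduction, but the heuristic should be discarded.

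Second, the hypothesis you aim to supply is stronger than what Theorem~\ref{Doyle's theorem} requires: the paper needs only $\inf_{m,n} D_1u_{m,n}>-\infty$, a one-sided lower bound on the ratio in a \emph{single} lattice direction, not a full two-sided Ring Lemma bound. Thus the genuinely open input is merely to rule out $r_{m+1,n}/r_{m,n}\to 0$ along some sequence. Your sketched attack via convexity of the Colin de Verdi\`ere energy and a maximum principle for the edge function $\eta_{vw}=u_w-u_v$ is a reasonable plan, but the proposal offers no mechanism to overcome the known obstacle: the conductances degenerate precisely where the ratios degenerate, so the maximum principle loses force exactly where it is needed. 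That is the heart of the open problem, and the proposal does not advance it.
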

In the case of \textbf{coherent circle packings}, where any two circles are identical or have mutually disjoint interiors in $\mathbb{C}$, Beardon, Dubejko and Stephenson \cite{Stephenson_D} gave an affirmative answer to the Doyle conjecture.
However, for the general cases, the conjecture remains open. As a related result, Callahan and Rodin proved that locally univalent hexagonal packings of $\mathcal{T}_H$ form regular exhaustible surfaces in \cite{Rodin93}, which satisfies Ahlfors's value distribution theory.

Aside from circle packings, circle patterns on the hexagonal lattice or the square grid also admit structures like doyle spirals, see \cite{circlepattern_square, bobenko1,Bobenko2}. Those
patterns are closely related to the theory of discrete analytic functions, where Doyle spirals can be viewed as discrete analogs of the exponential function see e.g. \cite{Stephenson_D}.
In particular, Schramm \cite{circlepattern_square} proved that the Doyle conjecture is false for orthogonal patterns on the square grid by finding a special family of circle patterns called erf-like patterns.

In this paper, we prove that Conjecture \ref{conj} is true for a large class of locally univalent hexagonal circle packings as follows.
\begin{thm}\label{Doyle's theorem} 
    Let $\mathcal{P}$ be a locally univalent circle packing of hexagonal triangulation $\mathcal{T}_H$ on the complex plane $\mathbb{C}$. If there exists a positive constant $c$ such that
    \begin{align}\label{hanack}
\frac{r_{m+1,n}}{r_{m,n}}\ge c,~\forall m,n\in \mathbb{Z},
    \end{align}
    then it is either a regular hexagonal packing or a Doyle spiral.
\end{thm}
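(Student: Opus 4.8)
The plan is to reduce the statement to the angle‑sum equations of the packing, to observe that these force the log‑ratio function $h_{m,n}=\log(r_{m+1,n}/r_{m,n})$ to be harmonic for an electrical network with positive, \emph{uniformly bounded‑above} conductances, and then to combine recurrence of that network with the one‑sided bound \eqref{hanack}.

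\textbf{Step 1 (setup).} Write $u_{m,n}=\log r_{m,n}$. Every vertex of $\mathcal T_H$ is interior, and local univalence forces the six corner angles around each vertex to sum to exactly $2\pi$: each lies in $(0,\pi)$, the flower closes up so the total is a positive multiple of $2\pi$, and disjoint interiors forbid anything larger. Thus a locally univalent packing is precisely a positive solution of $\theta_v(u)=2\pi$ at every $v$, where $\theta_v$ is the total angle at $v$ assembled from the six tangent triples. I would use: (i) scale invariance, so $\theta_v$ depends only on the differences $u_z-u_v$ and $\partial_{u_v}\theta_v=-\sum_{z\sim v}\partial_{u_z}\theta_v$; (ii) $\partial_{u_z}\theta_v>0$ for $z\sim v$; (iii) the Colin de Verdière symmetry $\partial_{u_z}\theta_v=\partial_{u_v}\theta_z$; and (iv) the uniform bound $\partial_{u_z}\theta_v\le 1$, which follows from a short computation — writing a corner angle of a triple as $2\arctan\sqrt{r_zr_w/(r_v(r_v+r_z+r_w))}$, each of the two triangle contributions to $\partial_{u_z}\theta_v$ is at most $\tfrac12$.

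\textbf{Step 2 (the log‑ratio is weighted harmonic).} The lattice translation $\tau\colon v_{m,n}\mapsto v_{m+1,n}$ is an orientation‑preserving automorphism of $\mathcal T_H$, hence $u\circ\tau$ also solves the angle‑sum equations. Put $h:=u\circ\tau-u$, so $h_{m,n}=\log(r_{m+1,n}/r_{m,n})$. Integrating $\tfrac{d}{dt}\theta_v(u+th)$ over $[0,1]$ and using $\theta_v(u+h)=\theta_v(u\circ\tau)=2\pi=\theta_v(u)$ gives, for every $v$,
\[
\sum_{z\sim v}A_{vz}\,(h_z-h_v)=0,\qquad A_{vz}:=\int_0^1\partial_{u_z}\theta_v(u+th)\,dt,
\]
after using (i) to absorb the $z=v$ term. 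By (ii)–(iv) the weights satisfy $A_{vz}=A_{zv}\in(0,1]$, so $h$ is a harmonic function on the electrical network $(\mathcal T_H,A)$ whose conductances are bounded above by $1$.

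\textbf{Step 3 (Liouville, then collapse the second direction).} Since $\mathcal T_H$ is a bounded‑degree planar triangulation, its spheres $S_k$ about a basepoint satisfy $|S_k|=O(k)$; with the pairwise disjoint cutsets $\Pi_k$ of edges between $S_k$ and $S_{k+1}$ one has $\sum_{e\in\Pi_k}A(e)\le|\Pi_k|=O(k)$, so $\sum_k\big(\sum_{e\in\Pi_k}A(e)\big)^{-1}=\infty$ and the Nash–Williams criterion gives recurrence of the network. On a recurrent reversible network every nonnegative superharmonic function is constant; since by \eqref{hanack} $h-\log c\ge 0$ is harmonic, it is constant, i.e.\ $r_{m+1,n}/r_{m,n}\equiv x$ for some $x\ge c$. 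Writing $p=\log x$, we then have $u_{m,n}=mp+\psi(n)$, and the angle‑sum equation at $v_{m,n}$ now involves only $p$, $\psi'(n):=\psi(n+1)-\psi(n)$ and $\psi'(n-1)$: the six corners split into three contributing a quantity $G(p,\psi'(n))$ and three contributing $H(p,\psi'(n-1))$, whence $G(p,\psi'(n))+H(p,\psi'(n-1))=2\pi$. But Definition \ref{doyle_spiral_defn} asserts that \emph{every} log‑affine profile $mp+nt$ is a packing, i.e.\ $G(p,t)+H(p,t)=2\pi$ for all $t$; subtracting, $G(p,\psi'(n))=G(p,\psi'(n-1))$, and since $t\mapsto G(p,t)$ is strictly increasing (by the same monotonicity as in (ii)) this forces $\psi'$ constant. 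Hence $u_{m,n}=mp+n\log y+\log r_0$ and $\mathcal P=\mathcal P_{r_0,x,y}$ — a Doyle spiral, or a regular hexagonal packing when $x=y=1$.

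\textbf{Main obstacle.} The subtlety is that \eqref{hanack} gives only a \emph{one‑sided} bound on \emph{one} family of ratios, so one cannot make the network uniformly elliptic and cannot use an elliptic Harnack inequality for bounded harmonic functions. The resolution — and the crux of the argument — is that ellipticity is not needed: only the cheap upper bound $\partial_{u_z}\theta_v\le 1$, which already yields recurrence, and recurrence then upgrades the one‑sided bound to rigidity via the Liouville theorem for \emph{nonnegative} harmonic functions. Making Step 1(iii)–(iv) and the degeneracy of the $\psi'$‑recursion in Step 3 fully rigorous is where the real work lies.
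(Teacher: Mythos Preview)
Your proposal is correct and follows essentially the same approach as the paper: one takes the spatial difference $D_1u=\log(r_{m+1,n}/r_{m,n})$, uses the Newton--Leibniz integral along the segment joining $u$ to its translate to exhibit $D_1u$ as a weighted harmonic function with symmetric positive weights bounded above (your bound $\le 1$ per edge is in fact sharper than the paper's $<2$), invokes recurrence of the hexagonal network via Nash--Williams/quadratic growth together with the one-sided hypothesis to force $D_1u$ constant, and then collapses the second direction by a monotonicity argument on the flower. The only cosmetic differences are your use of the half-angle $\arctan$ formula in place of the $\arccos$ formula in Lemma~\ref{uniform}, and your phrasing of the last step as $G(p,t)+H(p,t)=2\pi$ rather than the paper's direct argument that $a_2=-a_1$.
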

 Note that the condition \eqref{hanack} is guaranteed by the Ring Lemma, Lemma~\ref{ring}, when all the flowers are univalent. Therefore, we have the following corollary.
\begin{cor}\label{univalent_flower}
    Any circle packing with univalent flowers of the hexagonal triangulation $\mathcal{T}_H$ on the complex plane $\mathbb{C}$ is either a regular hexagonal packing or a Doyle spiral.
\end{cor}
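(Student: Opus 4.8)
The plan is to derive Corollary~\ref{univalent_flower} directly from Theorem~\ref{Doyle's theorem}. A circle packing with univalent flowers is in particular locally univalent (as observed after Definition~\ref{circle_packing}), so the only thing left to check is that the hypothesis \eqref{hanack} is automatically satisfied once every flower is univalent. This is supplied by the Ring Lemma, Lemma~\ref{ring}.

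In more detail, fix any vertex $v_{m,n}$ of $\mathcal{T}_H$. Since $\mathcal{T}_H$ is the hexagonal triangulation of the whole plane, $v_{m,n}$ has degree $6$, so its flower $\mathcal{F}(v_{m,n})$ consists of the central circle $C_{v_{m,n}}$ together with six petal circles tangent consecutively around it. By hypothesis this flower is a univalent circle packing, hence all seven circles have pairwise disjoint interiors, which is exactly the configuration to which the Ring Lemma applies. It yields a constant $c = c(6) > 0$, depending only on the number of petals, such that the radius of each petal is at least $c$ times the radius of the center. Taking the center to be $v_{m,n}$ and the petal to be $v_{m+1,n}$ gives
\[
  r_{m+1,n} \ge c\, r_{m,n}, \qquad \forall\, m,n \in \mathbb{Z},
\]
which is precisely \eqref{hanack}. (Applying the same estimate to the flower centered at $v_{m+1,n}$, with $v_{m,n}$ as one of its petals, one in fact obtains the two-sided bound $c \le r_{m+1,n}/r_{m,n} \le c^{-1}$, and likewise for the other five directions; only the stated one-sided bound is needed here.) Theorem~\ref{Doyle's theorem} then concludes that $\mathcal{P}$ is a regular hexagonal packing or a Doyle spiral.

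There is no genuine obstacle in this argument; the only point to be attentive to is that the Ring Lemma is invoked with the correct hypotheses --- the petals together with the center must have mutually disjoint interiors, which is precisely the strict univalence of the flower --- and that it is applied with the single degree value $k = 6$ at every vertex, so that the constant $c$ can be chosen uniformly. All of the substantive content of the corollary is already carried by Theorem~\ref{Doyle's theorem}.
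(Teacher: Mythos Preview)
Your proposal is correct and follows exactly the approach indicated in the paper: use the Ring Lemma (Lemma~\ref{ring}) with $k=6$ to obtain the uniform bound \eqref{hanack}, then invoke Theorem~\ref{Doyle's theorem}. The paper states this in a single sentence, while you have spelled out the details, but the argument is the same.
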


\begin{rem}
\begin{enumerate}[a.]
   \item Rigidity results of circle packings of $\mathcal{T}_H$ proved by Rodin and Sullivan \cite{Rodin_Sullivan}, and by Beardon, Dubejko and Stephenson \cite{Stephenson_D} are direct corollaries of the above result. The class of circle packings with univalent flowers is larger than that of univalent circle packings. In the former case, interiors of two different circles may have a non-empty intersection if they are not contained in a flower.
\item Since Theorem \ref{Doyle's theorem} is about the rigidity of locally univalent circle packings, it is an extension of previous rigidity results \cite{Schramm_oded,He_schramm,HE} for the hexagonal lattice case.
\item 
In Theorem \ref{Doyle's theorem}, we only assume the lower bound of radii ratios in one direction, having no assumptions in other directions, e.g. $\frac{r_{m,n+1}}{r_{m,n}}$ and $\frac{r_{m+1,n-1}}{r_{m,n}}$.
\item
   Schramm proved a similar result for $\mathrm{SG}$-patterns, which are orthogonal circle patterns on square grids, see \cite[Theorem 7.2 and Corollary 7.3]{circlepattern_square}. Taking the advantage of the special structure of square grids, he proved that the collection of entire $\mathrm{SG}$-patterns in the sphere are infinite-dimensional, and a certain class of $\mathrm{SG}$-patterns are SG-Doyle spirals.

   \end{enumerate}
\end{rem}

To prove the rigidity of infinite circle packings, topological assumptions such as the univalence and the countability for the complement of the carriers in $\mathbb{S}^2$ are crucial for the arguments in \cite{Schramm_oded,He_schramm,HE}. For the Doyle conjecture, one only has the assumption of locally univalent structure, analogous to the immersion in geometry, for which there is no global information for the carrier. The key idea in this paper is to derive the equation for radii ratios directly from the nonlinear equation of the circle packing \eqref{equation}, a discrete analog of elliptic partial differential equation, and to apply the Liouville theorem for deriving the rigidity result. The proof strategy is as follows: we first define two spatial difference operators $D_1$ and $D_2$ on functions of the hexagonal lattice $H$. Let $u=(u_{m,n})_{m,n\in\mathbb{Z}}$ be the logarithmic of the radii $r=(r_{m,n})_{m,n\in\mathbb{Z}}$ of circles in $\mathcal{P}$. As a key observation, we prove that $D_1 u$  is a discrete harmonic function for some uniformly bounded weights defined on $\mathcal{G}_H=(H,E_H)$, with the help of equations \eqref{equation}. Next, since $D_1u$ is bounded below by \eqref{hanack}, we prove that $D_1u\equiv k_1$, for some constant $k_1$ thanks to the following well-known Liouville theorem, a corollary of the Nash-Williams test for the recurrence of the simple random walk.
\begin{lem}\label{harmonic_lemma}
For a hexagonal graph with uniformly bounded weights, any positive superharmonic function is constant.
\end{lem}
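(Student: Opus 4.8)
\section*{Proof proposal for Lemma~\ref{harmonic_lemma}}

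The plan is to reduce the statement to the recurrence of the weighted simple random walk on $\mathcal{G}_H$ and then apply the standard probabilistic dichotomy. Write the edge weights as $w_{xy}>0$, put $\pi(x)=\sum_{y\sim x}w_{xy}$, and let $(X_n)_{n\ge 0}$ be the Markov chain on $H$ with transition probabilities $p(x,y)=w_{xy}/\pi(x)$; here ``uniformly bounded weights'' means $w_{xy}\le C_0$ for some constant $C_0$ (and, since $\mathcal{G}_H$ has bounded degree $6$, $\pi$ is bounded as well). In this language a function $h$ is superharmonic for the weighted Laplacian exactly when $h(x)\ge\sum_{y}p(x,y)h(y)$ for every $x$. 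Thus it suffices to prove two things: (i) $(X_n)$ is recurrent, and (ii) on an irreducible recurrent chain every nonnegative superharmonic function is constant.

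For (i) I would use the Nash--Williams test: if there are pairwise disjoint finite edge cutsets $\Pi_k$ separating a root $o$ from infinity with $\sum_k\big(\sum_{e\in\Pi_k}w_e\big)^{-1}=\infty$, then the network is recurrent. Fix $o\in H$, let $B_k$ be the ball of radius $k$ in the graph metric of $\mathcal{G}_H$, and take $\Pi_k=\partial_E B_k$, the set of edges joining a vertex at distance $k$ from $o$ to one at distance $k+1$. These $\Pi_k$ are automatically pairwise disjoint and each separates $o$ from infinity. Because $\mathcal{T}_H$ is the triangular lattice, which is quasi-isometric to $\mathbb{Z}^2$, one has $|\Pi_k|\le Ak$ for some constant $A$ and all large $k$; combined with $w_e\le C_0$ this gives $\sum_{e\in\Pi_k}w_e\le AC_0k$, hence $\sum_k\big(\sum_{e\in\Pi_k}w_e\big)^{-1}\ge\sum_k (AC_0k)^{-1}=\infty$. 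By Nash--Williams the effective resistance from $o$ to infinity is infinite, so $(X_n)$ is recurrent.

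For (ii), let $h\ge 0$ be superharmonic and start $(X_n)$ from an arbitrary vertex $x$. Then $M_n:=h(X_n)$ is a nonnegative supermartingale, so by the martingale convergence theorem $M_n\to M_\infty$ almost surely. By irreducibility (the graph is connected and all $w_{xy}>0$) and recurrence, almost surely the chain visits every vertex $y\in H$ infinitely often, so $h(y)=M_n$ for infinitely many $n$ and therefore $h(y)=M_\infty$ a.s.; since $y$ was arbitrary and $M_\infty$ does not depend on $y$, the constants $h(y)$ all coincide, i.e.\ $h$ is constant. That $h$ is merely superharmonic rather than harmonic causes no extra difficulty, and this covers in particular positive superharmonic functions.

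The only place that genuinely uses the geometry of the hexagonal lattice, and the one bit of real work, is the combinatorial estimate $|\partial_E B_k|=O(k)$ in step (i): this linear growth of edge boundaries is exactly the feature that makes the two-dimensional lattice recurrent, and it is precisely where the upper bound on the weights enters, since if the weights were allowed to blow up along the cutsets the series could converge and the conclusion would fail.
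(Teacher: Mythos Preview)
Your proposal is correct and follows essentially the same route as the paper: establish recurrence of the weighted walk on $\mathcal{G}_H$ from a two-dimensional growth estimate, then invoke the standard fact that recurrence forces nonnegative superharmonic functions to be constant. The paper condenses both steps to citations---it bounds $\mathrm{Vol}(B_n)\le Cn^2$ and appeals to \cite[Theorem~6.13]{grigor2018introduction} for recurrence, then to \cite[Theorem~1.16]{woess2000random} for the Liouville step---whereas you unpack them via Nash--Williams with the linear edge-boundary bound $|\partial_E B_k|=O(k)$ and the supermartingale convergence argument; the paper itself remarks in the introduction that the lemma is a corollary of the Nash--Williams test, so your version is really the same proof with the black boxes opened.
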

 Finally, one can prove that $D_2u$ is also a constant by the circle packing structure. Hence $u$ is a linear function, which proves that the circle packing $\mathcal{P}$ is a Doyle spiral. 

In the proof of Theorem~\ref{he}, given two circle packings $\mathcal{P}$ and $\mathcal{P}^*$, He \cite{HE} constructed a continuous family $(\mathcal{P}(t))_{t\ge 0}$ connecting them, derived a discrete harmonic function by taking differentiation in $t.$ Different from that, our construction is taking the spatial difference of one solution, which is motivated from the gradient estimate for nonlinear partial differential equation. Note that a similar consideration appears in the proofs of rigidity results of infinite hexagonal triangulations in the discrete conformal geometry theory, \cite{wu2015rigidity,MR4389487,convergence_discrete_conformal}, where conjectures similar to the Doyle conjecture are still open.

The paper is organized as follows: in the next section, we recall basic facts of circle packings and discrete harmonic functions. Section~\ref{sec:var} is devoted to the variational principle of circle packings. The proof of the main theorem is contained in the last section.

\section{Preliminaries}\label{sec:2}

\subsection{Locally univalent circle packings of triangulations and the Ring Lemma}
Let $\mathcal{T}=(V,E,F)$ be a triangulation. We assume that $\mathcal{P}$ is a circle packing of $\mathcal{T}$. Let
$r\in\mathbb{R}_+^V$ be the radius vector.
Then $r$ determines an intrinsic metric for the $\mathrm{Carr}(\mathcal{P})$. For each face $T$ of $\mathcal{T}$ with vertices $v_1,v_2$ and $v_3$, there exists a unique Euclidean triangle with edge lengths $r_{v_1}+r_{v_2},r_{v_1}+r_{v_3}$ and $r_{v_2}+r_{v_3}$ as shown in Figure \ref{ccp}.
We denote by $\theta_{v_i}^T$ the inner angle of $T$ at 
the vertex $v_i$. Then $\theta_{v_i}^T$ is a function of $r$ which only depends on $r_{v_i},i=1,2,3.$ We assume that $\mathcal{P}$ is a locally univalent circle packing on the plane. By the definition of the local univalence, one gets the system of equations for a circle packing,
\begin{align}
\sum_{T:T>v}\theta_v^T(r)=2\pi, ~\forall v\in V, \label{equation}
\end{align}
where $T>v$ means $v$ is a vertex of $T$.

One easily verifies by \eqref{equation} that a Doyle spiral defined in Definition \ref{doyle_spiral_defn} is a locally univalent circle packing. An important feature of circles in a univalent flower is that they satisfy the Ring Lemma of Rodin and Sullivan \cite{Rodin_Sullivan}.
\begin{lem}[\cite{Rodin_Sullivan} Ring Lemma]\label{ring}
    Let $\mathcal{P}$ be a circle packing of a triangulation $\mathcal{T}$. Set $v\in V$ and $w\in N(v)$. Then if $|N(v)|= k$ for some $k\ge 3$, and the flower $\mathcal{F}(v)$ is univalent, then there exists a positive constant $\mathcal{R}(k)$ that only depends on $k$ such that $$\mathcal{R}(k)\leq \frac{r_w}{r_v}.$$  Moreover, $$\mathcal{R}(k+1)<\mathcal{R}(k)\leq 1.$$
\end{lem}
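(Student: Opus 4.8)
The plan is to reprove the Ring Lemma by elementary Euclidean geometry, in the spirit of the original argument of Rodin and Sullivan. After normalizing $r_v=1$ it suffices to bound $\min_{w\in N(v)}r_w$ from below, so I would label the petals cyclically as $C_{w_1},\dots,C_{w_k}$ following the orientation of $\mathcal{T}$. The hypothesis that $\mathcal{F}(v)$ is univalent has three consequences: consecutive petals $C_{w_i},C_{w_{i+1}}$ are tangent and each petal is tangent to $C_v$; the petals have pairwise disjoint interiors; and, by \eqref{equation}, the angles $\gamma_i$ of the triangles $vw_iw_{i+1}$ at $v$ satisfy $\sum_{i=1}^k\gamma_i=2\pi$. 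A law-of-cosines computation in the triangle with side lengths $1+r_{w_i}$, $1+r_{w_{i+1}}$, $r_{w_i}+r_{w_{i+1}}$ gives $\sin(\gamma_i/2)=s_is_{i+1}$, where $s_j:=\sqrt{r_{w_j}/(1+r_{w_j})}\in(0,1)$, and more generally two petals $C_{w_i},C_{w_j}$ whose tangent points on $C_v$ subtend an angle $\delta$ at $v$ have disjoint interiors if and only if $\sin(\delta/2)\ge s_is_j$.

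I would then argue by contradiction, assuming the smallest petal $C_{w_1}$ has radius $\rho:=r_{w_1}\ll1$ (there is nothing to prove if $\rho\ge1$). Since $\gamma_1=2\arcsin(s_1s_2)$ and $\gamma_k=2\arcsin(s_ks_1)$ are both $O(\sqrt\rho)$, the tangent points of the two neighbours $C_{w_2},C_{w_k}$ lie in a tiny angular sector about the point $p$ where $C_{w_1}$ meets $C_v$. The efficient way to use this is to invert in a circle centred at $p$: then $C_v$ and $C_{w_1}$ become parallel lines bounding a strip of width $W=\tfrac{1+\rho}{2\rho}\sim\tfrac{1}{2\rho}$, the remaining petals become a chain $\widetilde C_2,\dots,\widetilde C_k$ of circles with pairwise disjoint interiors inside the closed strip, each tangent to the line $\ell_v$ coming from $C_v$, consecutive ones tangent, with $\widetilde C_2$ and $\widetilde C_k$ additionally tangent to the line coming from $C_{w_1}$ — hence of radius exactly $W/2$, while their disjointness forces their feet on $\ell_v$ to be at distance $\ge W$. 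Moreover the feet of the petals on $\ell_v$ (the images of their tangent points on $C_v$) straddle the image $p'$ of the point of $C_v$ antipodal to $p$, and a direct computation shows that the radius of $\widetilde C_i$ is at most $\tfrac14+(\mathrm{dist}(\mathrm{foot}_i,p'))^2$, so a chain circle whose foot lies within bounded distance of $p'$ has bounded radius; this bound reflects that the original petal is externally tangent to $C_v$.

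The heart of the matter is to show that such a chain cannot exist once $W$ is large, and to turn this into an explicit threshold $\mathcal{R}(k)$. Combining the constraint above with the Ford-circle identity $|\mathrm{foot}_i-\mathrm{foot}_{i+1}|=2\sqrt{\widetilde r_i\widetilde r_{i+1}}$ for consecutive tangent circles on a common line, with the disjointness inequalities $\sin(\delta_{ij}/2)\ge s_is_j$ for all pairs, and with $\sum_i\arcsin(s_is_{i+1})=\pi$, I would show that a chain with only $k-1$ circles cannot simultaneously attain radius $W/2$ at both ends, pass near $p'$ in the middle with bounded radius, and span horizontal distance $\ge W$; this forces $\rho\ge\mathcal{R}(k)>0$ with $\mathcal{R}(k)$ depending only on $k$. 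For $k=3,4$ the contradiction is softer: as $\rho\to0$ one has $\gamma_2+\dots+\gamma_{k-1}=2\pi-\gamma_1-\gamma_k\to2\pi$ with each $\gamma_i<\pi$, which is impossible for $k=3$ and for $k=4$ forces $s_2,s_3,s_4\to1$, violating the disjointness of $C_{w_2}$ and $C_{w_4}$. Finally, $\mathcal{R}(k+1)<\mathcal{R}(k)$ follows from the explicit shape of the constant — a longer chain permits a smaller $\rho$ — and one arranges $\mathcal{R}(k)\le1$ either by replacing it with $\min(1,\cdot)$ or by exhibiting, for each $k$, a univalent $k$-flower with a petal of ratio $\le1$ (the regular flower when $k=6$, a compressed near-regular flower when $k>6$).

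The step I expect to be the real obstacle is this quantitative "the chain cannot close up" estimate: controlling how the petal radii may vary around the flower under the full family of pairwise-disjointness constraints — not merely those between neighbours of a neighbour — and ruling out intermediate petals that are neither tiny nor huge is where the explicit value of $\mathcal{R}(k)$ must be earned. For the application in this paper only $k=6$ is needed, and there the analysis of the six-petal chain can be carried out by hand.
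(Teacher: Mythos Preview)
The paper does not supply a proof of the Ring Lemma; it is quoted as a known result of Rodin--Sullivan, with a pointer to Appendix~B of Stephenson's book for proofs. There is therefore no argument in the paper against which to compare your attempt.

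Your inversion-based outline is a legitimate route, and the intermediate claims check out: the half-angle identity $\sin(\gamma_i/2)=s_is_{i+1}$, the strip width $W=(1+\rho)/(2\rho)$, the Ford relation $|f_i-f_{i+1}|=2\sqrt{\widetilde r_i\widetilde r_{i+1}}$, and also the bound $\widetilde r_i<\tfrac14+\mathrm{dist}(f_i,p')^2$, which indeed follows from $\widetilde r_i=s_i^2\,\mathrm{dist}(p,f_i)^2$ together with $s_i<1$ and the observation that $p'$ is the foot of the perpendicular from $p$ to $\ell_v$. The genuine gap is exactly the one you name yourself: converting the qualitative picture of the chain $\widetilde C_2,\dots,\widetilde C_k$ in the strip into a quantitative lower bound on $\rho$ depending only on $k$ is the entire content of the lemma, and you have not carried it out. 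The strict monotonicity $\mathcal{R}(k+1)<\mathcal{R}(k)$ likewise needs more than ``a longer chain permits a smaller $\rho$'': one must exhibit, for each $k$, a univalent $(k{+}1)$-flower whose minimal ratio is strictly below the infimum over all $k$-flowers, or work with the sharp constants. As you note, only $k=6$ is used in this paper, and for that case the strip argument can be finished by hand; but as written your proposal remains a sketch rather than a proof.
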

    There are various proofs of Ring Lemma in the literature, see e.g. \cite[Appendix B]{Stephenson_intro}.
We denote by $d(v)=|N(v)|$ the number of neighbors of a vertex $v$, called the \textbf{degree} of the vertex $v$. With the help of the Ring Lemma, we obtain the following corollary.
    \begin{cor}\label{bounded degree}
        Let $\mathcal{P}$ be a circle packing with univalent flowers of a triangulation $\mathcal{T}=(V,E,F)$. If $K:=\sup_{v\in V} d(v)<\infty$, then 
        \[
        \mathcal{R}(K)\le\frac{r_v}{r_w}\le\frac{1}{\mathcal{R}(K)}, ~~\forall v\sim w.
        \]
    \end{cor}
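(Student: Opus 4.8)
The plan is to reduce the statement to two applications of the Ring Lemma (Lemma~\ref{ring}), one centered at each endpoint of the given edge. Fix an edge $v\sim w$ in $\mathcal{T}$. Since $\mathcal{P}$ has univalent flowers, the flower $\mathcal{F}(v)$ is a univalent circle packing; moreover any vertex carrying a (genuinely surrounding) flower has at least three neighbors, so $3\le d(v)\le K$. Because $w\in N(v)$, Lemma~\ref{ring} gives $\mathcal{R}(d(v))\le r_w/r_v$. Invoking the monotonicity asserted in Lemma~\ref{ring}, namely $\mathcal{R}(k+1)<\mathcal{R}(k)$, together with $d(v)\le K$, we get $\mathcal{R}(K)\le\mathcal{R}(d(v))\le r_w/r_v$, which rearranges to $r_v/r_w\le 1/\mathcal{R}(K)$.

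For the reverse inequality I would simply exchange the roles of $v$ and $w$: the adjacency relation is symmetric, so $v\in N(w)$ and $\mathcal{F}(w)$ is a univalent flower with $3\le d(w)\le K$, whence Lemma~\ref{ring} yields $\mathcal{R}(K)\le\mathcal{R}(d(w))\le r_v/r_w$. Combining the two bounds produces $\mathcal{R}(K)\le r_v/r_w\le 1/\mathcal{R}(K)$ for every edge $v\sim w$, as claimed. The two-sided bound is self-consistent because $\mathcal{R}(K)\le 1\le 1/\mathcal{R}(K)$, again by the inequality $\mathcal{R}(k)\le 1$ in Lemma~\ref{ring}.

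There is no substantial obstacle in this argument; it is a bookkeeping consequence of the Ring Lemma. The only point deserving a word of care is the hypothesis $k\ge 3$ in Lemma~\ref{ring}, which is automatic here since the presence of a univalent flower at a vertex forces that vertex to have degree at least $3$. The finiteness assumption $K=\sup_{v\in V}d(v)<\infty$ is precisely what allows one to replace the vertex-dependent constant $\mathcal{R}(d(v))$ by the single uniform constant $\mathcal{R}(K)$, using monotonicity of $\mathcal{R}$.
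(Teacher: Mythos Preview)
Your proof is correct and follows exactly the route the paper intends: the corollary is stated without proof precisely because it is an immediate two-sided application of Lemma~\ref{ring} together with the monotonicity $\mathcal{R}(k+1)<\mathcal{R}(k)\le 1$. There is nothing to add.
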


With the help of Corollary \ref{bounded degree}, we deduce from our main result, Theorem \ref{Doyle's theorem}, the following rigidity results in Rodin, Sullivan \cite{Rodin_Sullivan} and Beardon, Dubejko, Stephenson \cite{Stephenson_D}. 
\begin{thm}[\cite{Rodin_Sullivan}]
    If $\mathcal{P}$ is a univalent packing of $\mathcal{T}_H$ in $\mathbb{C},$ then $\mathcal{P}$ is a regular hexagonal packing.
\end{thm}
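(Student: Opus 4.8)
The plan is to obtain this theorem directly from Theorem~\ref{Doyle's theorem}, the only genuine step being to discard the non-trivial Doyle spirals on the grounds that they fail to be univalent.

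First I would check that the hypothesis \eqref{hanack} is automatic here. A univalent packing has, in particular, all of its flowers univalent, so $\mathcal{P}$ is a circle packing with univalent flowers; and in $\mathcal{T}_H$ every vertex has exactly six neighbours, whence $K:=\sup_{v}d(v)=6<\infty$. Corollary~\ref{bounded degree} then gives
\[
\mathcal{R}(6)\le \frac{r_v}{r_w}\le \frac{1}{\mathcal{R}(6)},\qquad \forall\, v\sim w,
\]
so in particular $r_{m+1,n}/r_{m,n}\ge \mathcal{R}(6)=:c>0$ for all $m,n\in\mathbb{Z}$. Hence Theorem~\ref{Doyle's theorem} applies, and $\mathcal{P}$ is either a regular hexagonal packing (and we are done) or a Doyle spiral $\mathcal{P}_{r_0,x,y}$.

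It then remains to exclude a Doyle spiral with $(x,y)\neq(1,1)$, since $\mathcal{P}_{r_0,1,1}$ is already a regular hexagonal packing. Here I would use the classical multiplicative structure of Doyle spirals: placing the spiral centre at the origin, the centres transform multiplicatively, $c_{m,n}=A\,w^{m}z^{n}$ with $|w|=x$ and $|z|=y$; writing $c_{m,n}=A\exp(\lambda_{m,n})$ with $\lambda_{m,n}=m\log w+n\log z$, the exponents fill a rank-two lattice $\Lambda\subset\mathbb{C}$ (the degenerate case, in which all centres lie on a single logarithmic spiral, is visibly not univalent and can be dismissed at once). Because $\Lambda$ is cocompact there is $R>0$ with $\mathrm{dist}(\zeta,\Lambda)\le R$ for every $\zeta\in\mathbb{C}$; applying this to the points $2\pi i k$, $k\in\mathbb{Z}$, produces lattice points $\lambda_{m_k,n_k}$ with $|\mathrm{Re}\,\lambda_{m_k,n_k}|\le R$ and $\mathrm{Im}\,\lambda_{m_k,n_k}=2\pi k+O(1)$. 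Consequently the corresponding centres $c_{m_k,n_k}$ all lie in a fixed bounded annulus, while the corresponding radii $r_{m_k,n_k}=r_0\,e^{\mathrm{Re}\,\lambda_{m_k,n_k}}$ stay $\ge r_0 e^{-R}>0$. An infinite family of distinct points in a bounded set must accumulate, so some two of these circles have centres arbitrarily close together with radii bounded below; their interiors then intersect, contradicting univalence. Therefore $(x,y)=(1,1)$ and $\mathcal{P}$ is a regular hexagonal packing.

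The single non-routine ingredient is this exclusion of non-trivial Doyle spirals, whose substance is the well-known fact that the carrier of such a spiral winds infinitely many times around the spiral centre; the exponential/lattice picture of the centres sketched above seems the cleanest way to make this rigorous, although one could instead observe that a univalent packing is coherent and invoke the theorem of Beardon, Dubejko and Stephenson~\cite{Stephenson_D}. Everything else is a bookkeeping application of the Ring Lemma through Corollary~\ref{bounded degree} and of the main theorem.
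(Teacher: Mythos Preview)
Your proposal is correct and follows exactly the route the paper takes: use the Ring Lemma via Corollary~\ref{bounded degree} to verify hypothesis~\eqref{hanack}, apply Theorem~\ref{Doyle's theorem}, and then discard the non-trivial Doyle spirals because they are not univalent. The paper simply asserts this last fact in one line (``the only univalent Doyle spiral is regular hexagonal packing''), whereas you supply a sketch via the multiplicative structure of the centres; your sketch is sound modulo some routine care in the degenerate cases, but it is elaboration rather than a different argument.
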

\begin{thm}[\cite{Stephenson_D}]
    If $\mathcal{P}$ is a coherent circle packing of $\mathcal{T}_H$ in $\mathbb{C}$, then $\mathcal{P}$ is a Doyle spiral.
\end{thm}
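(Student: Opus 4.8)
The plan is to deduce the statement from Corollary~\ref{univalent_flower}. Since local univalence is standing throughout, $\mathcal{P}$ satisfies the angle system \eqref{equation} at every vertex with all inner angles positive; and since every vertex of $\mathcal{T}_H$ has degree $6$, Corollary~\ref{univalent_flower} applies as soon as one knows that $\mathcal{P}$ is a circle packing with univalent flowers. It then yields that $\mathcal{P}$ is a regular hexagonal packing or a Doyle spiral, and since the regular hexagonal packing is the Doyle spiral $\mathcal{P}_{r_0,1,1}$ of Definition~\ref{doyle_spiral_defn}, this is exactly the asserted conclusion. So the whole argument reduces to the claim: a coherent (locally univalent) circle packing of $\mathcal{T}_H$ has univalent flowers.

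To prove this, fix a vertex $v$ with cyclically ordered neighbours $w_1,\dots,w_6$ and incident faces $T_i=(v,w_i,w_{i+1})$, and consider the flower $\mathcal{F}(v)=\{C_v,C_{w_1},\dots,C_{w_6}\}$. By the definition of coherence, any two of these seven circles are either identical or have disjoint interiors, so to conclude that $\mathcal{F}(v)$ is a univalent circle packing it suffices to verify that the seven circles are pairwise distinct. The center circle $C_v$ is externally tangent to each petal and hence differs from all of them. For two petals $C_{w_j}$ and $C_{w_k}$ with $j<k$, normalize so that the center of $C_v$ is the origin; then the center of $C_{w_\ell}$ lies at distance $r_v+r_{w_\ell}$ and polar angle $\phi_\ell:=\sum_{i=1}^{\ell-1}\theta_v^{T_i}$, where $\theta_v^{T_i}>0$ and $\sum_{i=1}^{6}\theta_v^{T_i}=2\pi$ by \eqref{equation}; hence $0=\phi_1<\phi_2<\dots<\phi_6<2\pi$. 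If $C_{w_j}=C_{w_k}$, their centers coincide, which forces $\phi_k-\phi_j\in 2\pi\mathbb{Z}$; but $\phi_k-\phi_j\in(0,2\pi)$, a contradiction. Thus the circles of $\mathcal{F}(v)$ are pairwise distinct, so by coherence they have pairwise disjoint interiors, and $\mathcal{F}(v)$ is univalent. As $v$ was arbitrary, $\mathcal{P}$ is a circle packing with univalent flowers, and Corollary~\ref{univalent_flower} completes the proof.

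The only delicate point is the distinctness step, and its key ingredient is that local univalence forces every inner angle $\theta_v^{T_i}$ to be strictly positive: this is precisely what confines each partial sum $\phi_k-\phi_j$ to the open interval $(0,2\pi)$ and thereby rules out two petals coinciding; without positivity the argument would collapse. (The companion rigidity statement, that a univalent packing of $\mathcal{T}_H$ is a regular hexagonal packing, follows in the same way — a univalent packing trivially has univalent flowers, so by Corollary~\ref{univalent_flower} it is a regular hexagonal packing or a Doyle spiral — together with the additional remark that a nontrivial Doyle spiral is not univalent.)
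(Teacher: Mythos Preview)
Your proof is correct and follows exactly the route the paper indicates: it deduces the result from Corollary~\ref{univalent_flower} together with the claim that every flower in a coherent packing is univalent, which the paper states in one line without further justification. Your added distinctness argument for the petals is a legitimate way to fill in that assertion; one small inaccuracy is that the strict positivity of each $\theta_v^{T_i}$ comes from the non-degeneracy of the circle-packing triangles (positive radii), while local univalence is what fixes the total angle at $2\pi$ rather than a higher multiple---but the logic of the bound $\phi_k-\phi_j\in(0,2\pi)$ is unaffected.
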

 The first result follows from the fact that the only univalent Doyle spiral is regular hexagonal packing. The second result is the consequence of Corollary \ref{univalent_flower} and that all flowers of a coherent circle packing are univalent.
 
    We point out that flowers of a Doyle spiral $\mathcal{P}_{r_0,x,y}$ are not always univalent, for all $x$ and $y$. Since when $x$ is large enough, the flowers are not univalent by the Ring Lemma.

\subsection{Discrete harmonic function on graphs}
For a graph $G=(V,E)$, two vertices $v,w$ are called neighbors, denoted by $v\sim w$ if $\{v,w\}\in E$.
\begin{defn} 
     Let $G=(V,E)$ be a graph,  $\eta:E\rightarrow\mathbb{R}_+$ be weights on edges, with $$\eta_{vw}=\eta_{wv},\quad\forall v\sim w.$$ A function $u$ on $V$ is called a weighted discrete harmonic (superharmonic resp.) function on the weighted graph $(G,\eta)$ if and only if 
    \[
    \sum_{w\sim v}\eta_{wv}(u_w-u_v)=0\ (\le0~\mathrm{resp.}),~~\forall v\in V.
    \]
    \end{defn}
Here we give a simple proof of Lemma \ref{harmonic_lemma}.

\begin{proof}
Given $v,w\in V,$ let $\mathrm{d}(v,w)$ be the combinatorial distance between $v$ and $w$ given by 
\begin{align*}
    \mathrm{d}(v,w)=\inf\{k:~\text{there is a 
path}~\gamma=v_0v_1,...,v_k,~\text{with}~v_0=v,~v_k=w \}.
\end{align*}
We denote by $B_n(v)$ the set $\{w\in V:\mathrm{d}(v,w)\le n\}.$ 
For any subset $W\subset V$, we write $$\mathrm{Vol}(W)=\sum_{v\in W}\sum_{w\sim v}\eta_{vw}.$$
Since weights are uniformly bounded, one easily verifies that $$\vol(B_n(v))\le Cn^2,\quad \forall n\geq 1.$$ i.e. the graph is of quadratic volume growth. By a well-known theorem, see e.g.  \cite[Theorem 6.13]{grigor2018introduction}, the simple random walk is recurrent. This yields that any positive superharmonic function is constant, see \cite[Theorem 1.16]{woess2000random}.
\end{proof}

\section{Variational principle on Euclidean triangles with circle packing metrics}\label{sec:var}
In this section, we briefly recall the variational principle for Euclidean triangles with circle packing metrics.
Let $T$ be a triangle with vertices $v_1,v_2$ and $v_3$ obtained by three tangent circles $C_1,C_2$ and $C_3$ with radii $r_1,r_2$ and $r_3$, as shown in Figure \ref{ccp}. 
 \begin{figure}[htbp]
\centering
\includegraphics[scale=0.60]{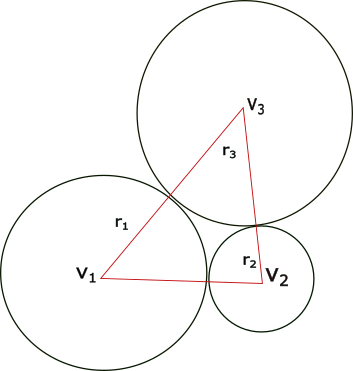}
\captionof{figure}{\small A triangle with the circle packing metric.}
  \label{ccp}
\end{figure} 
 We denote by $l_{12},l_{13}$ and $l_{23}$ the side lengths of $T$, which equal to $r_1+r_2,r_1+r_3$ and $r_2+r_3$, respectively. We denote by $\theta_1,\theta_2$ and $\theta_3$ the inner angles at $v_1,v_2$ and $v_3$ of $T$.
Colin de Verdi\`eres \cite{MR1106755} used the following well-known lemma to prove the Koebe-Andreev-Thurston Theorem, via the variational principle.
\begin{lem}\label{vari}
    Set $u_i=\ln r_i$ for $i=1,2,3.$  Then $\theta_i(i=1,2,3)$ is a smooth function of $u_1,u_2$ and $u_3$. Moreover, the following statements hold.
    \begin{enumerate}[(a)]
        \item $\pp{\theta_i}{u_j}>0$,~  $\forall i\neq j,$ and $\pp{\theta_i}{u_i}<0,~\forall i=1,2,3.$
        \item 
        $
        \pp{\theta_i}{u_j}=\pp{\theta_j}{u_i}~\forall i\neq j.$
        \item 
        $\pp{\theta_i}{u_1}+\pp{\theta_i}{u_2}+\pp{\theta_i}{u_3}=0,~\forall i=1,2,3.$
    \end{enumerate}
\end{lem}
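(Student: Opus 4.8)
The plan is to reduce all three assertions to a single closed-form expression for the half-angles. As a preliminary I would check that $T$ is a genuine non-degenerate triangle for every $r_1,r_2,r_3>0$: writing $l_{12}=r_1+r_2$, $l_{13}=r_1+r_3$, $l_{23}=r_2+r_3$ for the side lengths, the triangle inequalities hold strictly since, e.g., $l_{12}+l_{13}-l_{23}=2r_1>0$. Hence by the law of cosines $\cos\theta_i$ is a real-analytic function of $(l_{12},l_{13},l_{23})$ with values in $(-1,1)$; since each $l_{jk}=e^{u_j}+e^{u_k}$ is smooth in $u$ and $\arccos$ is smooth on $(-1,1)$, each $\theta_i$ is a smooth function of $(u_1,u_2,u_3)$.

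The computational heart is the following identity. The semiperimeter is $p=\tfrac12(l_{12}+l_{13}+l_{23})=r_1+r_2+r_3$, so that $p-l_{jk}=r_i$ whenever $\{i,j,k\}=\{1,2,3\}$ (the side $l_{jk}$ being the one opposite $v_i$). Feeding this into the standard half-angle formula $\tan^2(\theta_i/2)=\frac{(p-l_{ij})(p-l_{ik})}{p\,(p-l_{jk})}$ gives, with $S:=r_1+r_2+r_3=e^{u_1}+e^{u_2}+e^{u_3}$,
\[
\tan^2\frac{\theta_i}{2}=\frac{r_jr_k}{r_i\,S},\qquad\text{equivalently}\qquad 2\ln\tan\frac{\theta_i}{2}=u_j+u_k-u_i-\ln S .
\]
Since $\frac{\partial}{\partial u_m}\bigl(2\ln\tan(\theta_i/2)\bigr)=\frac{2}{\sin\theta_i}\frac{\partial\theta_i}{\partial u_m}$, differentiating the last display (taking $i=1$, so $j=2,k=3$, for concreteness) yields
\[
\frac{2}{\sin\theta_1}\frac{\partial\theta_1}{\partial u_1}=-1-\frac{r_1}{S},\qquad
\frac{2}{\sin\theta_1}\frac{\partial\theta_1}{\partial u_2}=1-\frac{r_2}{S},\qquad
\frac{2}{\sin\theta_1}\frac{\partial\theta_1}{\partial u_3}=1-\frac{r_3}{S},
\]
and analogously for $i=2,3$ by relabeling.

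From these formulas all three claims drop out. For (a): $\sin\theta_i>0$ and $0<r_m/S<1$, so the right-hand sides have the stated signs, giving $\partial\theta_i/\partial u_i<0$ and $\partial\theta_i/\partial u_j>0$ for $j\neq i$. For (c): adding the three displayed identities gives $(-1-r_1/S)+(1-r_2/S)+(1-r_3/S)=1-S/S=0$, whence $\sum_m \partial\theta_1/\partial u_m=0$, and likewise for $\theta_2,\theta_3$. For (b): it suffices to verify $\partial\theta_1/\partial u_2=\partial\theta_2/\partial u_1$, which by the formulas amounts to $\sin\theta_1\,(1-r_2/S)=\sin\theta_2\,(1-r_1/S)$, i.e.\ $\sin\theta_1\,(r_1+r_3)=\sin\theta_2\,(r_2+r_3)$, i.e.\ $\sin\theta_1\cdot l_{13}=\sin\theta_2\cdot l_{23}$; since $l_{23}$ is the side opposite $v_1$ and $l_{13}$ the side opposite $v_2$, this is precisely the law of sines.

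I do not expect a genuine obstacle: this is a classical computation. The only points requiring care are the index bookkeeping — ensuring that $p-l_{jk}=r_i$ uses the side opposite $v_i$, and that the law of sines is applied with matching labels — together with the non-degeneracy check that legitimizes the half-angle formula and the smoothness of $\arccos$. If one prefers a structural proof of the symmetry (b), one can instead note that it is equivalent to closedness of the $1$-form $\theta_1\,du_1+\theta_2\,du_2+\theta_3\,du_3$ and exhibit an explicit potential (the Colin de Verdi\`ere functional), but the direct verification above is the shortest route, and it has the advantage of producing the sign information in (a) along the way.
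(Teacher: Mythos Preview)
Your argument is correct and complete. The half-angle identity $\tan^2(\theta_i/2)=r_jr_k/(r_iS)$ is the right engine: it yields explicit closed forms for all nine partials, from which (a), (b), (c) follow by inspection, summation, and the law of sines respectively. The index bookkeeping and the non-degeneracy check are handled carefully.

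The paper, by contrast, does not prove the lemma in full: it attributes the result to Colin de Verdi\`ere and only comments on assertion (c), deriving it from the scale invariance $\theta_i(u_1+\lambda,u_2+\lambda,u_3+\lambda)=\theta_i(u_1,u_2,u_3)$ by differentiating in $\lambda$. Your proof of (c) is computational rather than structural, but equally valid, and your route has the advantage of simultaneously delivering the explicit formulas needed for (a) and (b). Your closing remark about the Colin de Verdi\`ere potential as an alternative proof of (b) is apt; that is indeed the standard variational-principle viewpoint the paper alludes to in the section title, though the paper itself does not spell it out for this lemma.
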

In Lemma \ref{vari}, the assertion (c) is obtained by the fact that for any $i$
\begin{align}\label{affine_prop}
\theta_i(u_1,u_2,u_3)=\theta_i(u_1+\lambda,u_2+\lambda,u_3+\lambda),~\forall \lambda\in \mathbb{R},
\end{align}
since angles of a triangle are invariant under the scaling of side lengths. By \eqref{affine_prop} and the cosine law of Euclidean triangles, the angle $\theta_i$ can be rewritten as
\begin{align}\label{normal_form}
\theta_i(u_1,u_2,u_3)=\Theta(u_j-u_i,u_k-u_i), ~~\text{with} ~\{i,j,k\}=\{1,2,3\},
\end{align}
where \begin{align}\label{change_function}
\Theta(x_1,x_2)=\arccos\frac{(1+e^{x_1})^2+(1+e^{x_2})^2-(e^{x_1}+e^{x_2})^2}{2(1+e^{x_1})(1+e^{x_2})}.
\end{align}
By \eqref{normal_form} and \eqref{change_function}, we get the following useful estimate, which is crucial for the proof of Theorem \ref{Doyle's theorem}. 
\begin{lem}\label{uniform}
Set $u_i=\ln r_i$ for $i=1,2,3.$ Let $\theta_i=\theta_i(u_1,u_2,u_3)(i=1,2,3)$ be inner angles at the vertex $v_i$ of the triangle. Then for $i\neq j$ 
\[
    \pp{\theta_i}{u_j}<1.
\]
\end{lem}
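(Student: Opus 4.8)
The plan is to obtain a closed form for $\pp{\theta_i}{u_j}$ and then reduce the desired inequality to an elementary estimate on the radii. By the normal form \eqref{normal_form}, writing $\{i,j,k\}=\{1,2,3\}$, $t=u_j-u_i$ and $s=u_k-u_i$, we have $\theta_i=\Theta(t,s)$, and since $\partial t/\partial u_j=1$ and $\partial s/\partial u_j=0$, the chain rule gives $\pp{\theta_i}{u_j}=\partial_{x_1}\Theta(t,s)$; so it suffices to prove $\partial_{x_1}\Theta(x_1,x_2)<1$ for all $x_1,x_2\in\mathbb{R}$. First I would simplify \eqref{change_function}: with $a=e^{x_1}$ and $b=e^{x_2}$ the numerator of the argument of $\arccos$ equals $2(1+a+b-ab)$ and the denominator equals $2(1+a)(1+b)$, so
\[
\cos\Theta=\frac{1+a+b-ab}{(1+a)(1+b)},\qquad 1-\cos\Theta=\frac{2ab}{(1+a)(1+b)},\qquad 1+\cos\Theta=\frac{2(1+a+b)}{(1+a)(1+b)};
\]
in particular $\tan^2(\Theta/2)=\frac{ab}{1+a+b}$, which is the familiar half-angle identity $\tan(\theta_i/2)=\sqrt{r_jr_k/\big(r_i(r_i+r_j+r_k)\big)}$ for a Euclidean triangle with the circle-packing metric.

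Next I would differentiate. Writing $\Theta=2\arctan\sqrt{g}$ with $g=\frac{ab}{1+a+b}$ and using $\partial_{x_1}a=a$, a direct computation gives $\partial_{x_1}g=\frac{ab(1+b)}{(1+a+b)^2}$ and $1+g=\frac{(1+a)(1+b)}{1+a+b}$, hence
\[
\partial_{x_1}\Theta=\frac{\partial_{x_1}g}{(1+g)\sqrt g}=\frac{\sqrt{ab}}{(1+a)\sqrt{1+a+b}}.
\]
(One may instead differentiate the $\arccos$ form directly, using $1-\cos^2\Theta=\tfrac{4ab(1+a+b)}{\big((1+a)(1+b)\big)^2}$; the outcome is the same.) Translating back via $a=r_j/r_i$ and $b=r_k/r_i$ yields the compact formula $\pp{\theta_i}{u_j}=\dfrac{\sqrt{r_ir_jr_k}}{(r_i+r_j)\sqrt{r_i+r_j+r_k}}$.

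Finally, $\pp{\theta_i}{u_j}<1$ is equivalent to $r_ir_jr_k<(r_i+r_j)^2(r_i+r_j+r_k)$, which holds for all positive radii since $(r_i+r_j)^2>r_ir_j$ and $r_i+r_j+r_k>r_k$ (equivalently, $\frac{ab}{(1+a)^2(1+a+b)}<1$ because $(1+a)^2>a$ and $1+a+b>b$). I do not expect a real obstacle here: the only point requiring care is the bookkeeping in the differentiation — tracking which argument of $\Theta$ is being varied and the chain-rule factors coming from $\Theta=2\arctan\sqrt{g}$ — and this is entirely routine.
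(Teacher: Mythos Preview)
Your proof is correct and follows essentially the same route as the paper: both reduce to computing $\partial_{x_1}\Theta$ and arrive at the identical closed form $\dfrac{1}{1+e^{x_1}}\sqrt{\dfrac{e^{x_1+x_2}}{1+e^{x_1}+e^{x_2}}}=\dfrac{\sqrt{ab}}{(1+a)\sqrt{1+a+b}}$, from which the bound $<1$ is immediate. The paper simply writes ``by direct calculation'' and states the formula, whereas you carry out the computation via the half-angle identity $\tan^2(\Theta/2)=ab/(1+a+b)$; this is a pleasant and transparent way to do the differentiation, but it is not a different argument.
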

\begin{proof}
  By direct calculation, we have 
    $$
    \pp{\Theta}{x_1}=\frac{1}{1+e^{x_1}}\sqrt{\frac{e^{x_1+x_2}}{1+e^{x_1}+e^{x_2}}}< 1.
    $$
\end{proof}
   A similar estimate can be found in \cite{HE} for proving the rigidity of univalent circle packings on the plane. Indeed, He proved that $\pp{\theta_i}{u_j}\le K\theta_i$ for some uniform constant $K.$  We also remark that such an estimate is useful for defining combinatorial Calabi flows, see \cite{ge_phd}.

\section{Proof of Theorem \ref{Doyle's theorem}}
Let $\mathcal{T}_H=(H,E_H,F_H)$ be the hexagonal triangulation.
Before proving Theorem \ref{Doyle's theorem}, we introduce some basic operators on $\mathbb{R}^H,$ the space of functions on $H$.
\begin{defn}
   The difference operators $D_1$ and $D_2$ are defined for any function $f=(f_{m,n})_{m,n\in\mathbb{Z}}\in \mathbb{R}^H,$
    \begin{align*}
    D_1f_{m,n}=f_{m+1,n}-f_{m,n},\\
    D_2f_{m,n}=f_{m,n+1}-f_{m,n}.
    \end{align*}
\end{defn}
Let $\mathcal{P}$ be a locally univalent circle packing of the hexagonal triangulation $\mathcal{T}_H$ in $\mathbb{C}$. Set $r=(r_{m,n})_{m,n\in\mathbb{Z}}$ the radii of circles in $\mathcal{P}$. Let $u=(u_{m,n})_{m,n\in\mathbb{Z}}$ given by
\[u_{m,n}=\ln r_{m,n},\]
which are also called \textbf{discrete conformal factors} of the circle packing $\mathcal{P}$ in \cite{gu2008computational}. Then the consequence of
Theorem \ref{Doyle's theorem} is equivalent to the following result.
\begin{thm}\label{main}
        Let $\mathcal{P}$ be a locally univalent circle packing on the complex plane $\mathbb{C}$ of $\mathcal{T}_H$. Let $u=(u_{m,n})_{m,n\in\mathbb{Z}}$ be discrete conformal factors of $\mathcal{P}$. Assume that $$\inf_{m,n\in\mathbb{Z}} D_1u > -\infty,$$ then there exist two constants $k_1$ and $k_2$ such that
    \[
    D_iu\equiv k_i,~\forall i=1,2.
    \]
\end{thm}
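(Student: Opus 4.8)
The plan is to prove $D_1u\equiv k_1$ and $D_2u\equiv k_2$ in turn: the first via the discrete Liouville theorem (Lemma~\ref{harmonic_lemma}) applied to $D_1u$, the second by a direct comparison of the packing equation with a Doyle spiral.

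\emph{Step 1 (the key observation): $D_1u$ is discrete harmonic.} Let $S\colon\mathbb{R}^H\to\mathbb{R}^H$ be the shift $(Sf)_{m,n}=f_{m+1,n}$. Since $v_{m,n}\mapsto v_{m+1,n}$ is an orientation-preserving automorphism of $\mathcal{T}_H$, the angle-sum functional $\Phi_v(u):=\sum_{T>v}\theta_v^T(u)$ satisfies $\Phi_{v_{m,n}}(Su)=\Phi_{v_{m+1,n}}(u)$; hence by \eqref{equation} both $u$ and $Su$ solve $\Phi_v(\cdot)=2\pi$ for all $v\in H$. Interpolating linearly, set $g(t)=(1-t)u+tSu$ and write, for each $v$,
\[
0=\Phi_v(Su)-\Phi_v(u)=\int_0^1\frac{\mathrm{d}}{\mathrm{d}t}\Phi_v(g(t))\,\mathrm{d}t=\int_0^1\sum_{T>v}\sum_{p\in T}\pp{\theta_v^T}{u_p}(g(t))\,(D_1u)_p\,\mathrm{d}t.
\]
Grouping the inner sum by vertex (for $p=v$ all six faces at $v$ occur, for $p\sim v$ only the two faces through the edge $\{v,p\}$) and using Lemma~\ref{vari}(c) in the form $\sum_{T>v}\pp{\theta_v^T}{u_v}=-\sum_{w\sim v}\sum_{T>v,\,T>w}\pp{\theta_v^T}{u_w}$, this becomes
\[
\sum_{w\sim v}\bar\eta_{vw}\bigl((D_1u)_w-(D_1u)_v\bigr)=0,\qquad \bar\eta_{vw}:=\int_0^1\sum_{T>v,\,T>w}\pp{\theta_v^T}{u_w}(g(t))\,\mathrm{d}t.
\]
By Lemma~\ref{vari}(a) the $\bar\eta_{vw}$ are positive, by Lemma~\ref{vari}(b) they are symmetric, and by Lemma~\ref{uniform} (each edge lies in exactly two faces) $\bar\eta_{vw}<2$. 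Thus $D_1u$ is a discrete harmonic function on $(\mathcal{G}_H,\bar\eta)$ with uniformly bounded weights.

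\emph{Step 2 (Liouville).} Since $\inf_{m,n}D_1u>-\infty$, there is a constant $C$ with $D_1u+C>0$; this function is still harmonic, hence superharmonic, so Lemma~\ref{harmonic_lemma} forces it to be constant. Therefore $D_1u\equiv k_1$ for some $k_1\in\mathbb{R}$, and consequently $u_{m,n}=k_1m+\phi_n$ with $\phi_n:=u_{0,n}$; in particular $D_2u_{m,n}=\phi_{n+1}-\phi_n=:\psi_n$ depends only on $n$.

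\emph{Step 3 ($D_2u$ is constant).} Substituting $u_{m,n}=k_1m+\phi_n$ into \eqref{equation} at $v_{m,n}$ and passing to the normal form \eqref{normal_form}, the six $u$-differences along edges at $v_{m,n}$ are $\pm k_1,\ \psi_n,\ \psi_n-k_1,\ -\psi_{n-1},\ k_1-\psi_{n-1}$, independent of $m$; grouping the three faces at $v_{m,n}$ meeting row $n+1$ and the three meeting row $n-1$, the equation reads
\[
H_1(\psi_n)+H_2(\psi_{n-1})=2\pi,\qquad n\in\mathbb{Z},
\]
where $H_1(t)=\Theta(k_1,t)+\Theta(t,t-k_1)+\Theta(t-k_1,-k_1)$ and $H_2(s)=\Theta(-k_1,-s)+\Theta(-s,k_1-s)+\Theta(k_1-s,k_1)$. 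On the other hand, the Doyle spiral $\mathcal{P}_{1,e^{k_1},e^{\sigma}}$ of Definition~\ref{doyle_spiral_defn} solves \eqref{equation}, which in the same variables reads $H_1(\sigma)+H_2(\sigma)=2\pi$ for every $\sigma\in\mathbb{R}$. Subtracting gives $H_1(\psi_n)=H_1(\psi_{n-1})$ for all $n$. Finally, using $\pp{\Theta}{x_1}>0$ (from the explicit formula in the proof of Lemma~\ref{uniform}) and the symmetry $\Theta(x_1,x_2)=\Theta(x_2,x_1)$, hence $\pp{\Theta}{x_2}>0$, we get
\[
H_1'(t)=\pp{\Theta}{x_2}(k_1,t)+\pp{\Theta}{x_1}(t,t-k_1)+\pp{\Theta}{x_2}(t,t-k_1)+\pp{\Theta}{x_1}(t-k_1,-k_1)>0,
\]
so $H_1$ is strictly increasing; therefore $\psi_n=\psi_{n-1}$ for all $n$, i.e. $D_2u\equiv k_2$ for some constant $k_2$.

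\emph{Main obstacle.} The heart of the matter is Step~1: recognizing that the spatial difference $D_1u$ of a solution of the nonlinear packing equation is discrete harmonic for a \emph{symmetric}, uniformly bounded weight. The symmetry (which makes the operator self-adjoint and is what allows the probabilistic Liouville theorem to apply) is exactly Lemma~\ref{vari}(b); positivity of the weights is Lemma~\ref{vari}(a); the cancellation of the diagonal term is Lemma~\ref{vari}(c); and the uniform upper bound is Lemma~\ref{uniform}. The comparison of $u$ with $Su$ is legitimate only thanks to the translation invariance of $\mathcal{T}_H$. By contrast, Steps~2 and~3 are routine once Step~1 is in place — the only mild point to record there is that the angle-sum at $v_{m,n}$ becomes $m$-independent after $D_1u$ is known constant, which is what reduces Step~3 to a one-dimensional comparison.
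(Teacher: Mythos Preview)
Your proof is correct and follows essentially the same approach as the paper: Step~1 reproduces Lemma~\ref{bounded_weight} verbatim, Step~2 is identical, and Step~3 is the same monotonicity-plus-Doyle-comparison argument as in the paper, just packaged more explicitly via the functions $H_1,H_2$ (the paper phrases it as ``the three lower angles are monotone in $a_2$, and $a_2=-a_1$ works since it is a Doyle flower'', then inducts). The only cosmetic difference is that you derive $H_1(\psi_n)=H_1(\psi_{n-1})$ from the global identity $H_1(\sigma)+H_2(\sigma)=2\pi$, whereas the paper appeals directly to uniqueness of the monotone equation at a single flower.
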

First, let us define a translation operator $R$ on the triangulation $\mathcal{T}_H$. 

\begin{enumerate}
    \item 
    For any vertex $v=v_{m,n}\in H$,
    $R(v_{m,n}):=v_{m+1,n}.$
    \item For any edge $\{v_1,v_2\}$ and any face $\{v_1,v_2,v_3\}$ of $\mathcal{T}_H$, 
    \[
    R(\{v_1,v_2\}):=\{R(v_1),R(v_2)\},~R(\{v_1,v_2,v_3\}):=\{R(v_1),R(v_2),R(v_3)\}
    .\]
\end{enumerate}
Hence for any $f\in \mathbb{R}^H$,  $D_1f_v=f_{R(v)}-f_v.$

Now suppose that $\mathcal{P}$ is a locally univalent circle packing of the hexagonal triangulation $\mathcal{T}_H$. Let $u$ be discrete conformal factors of $\mathcal{P}$.
Fixing an arbitrary vertex $v\in H$, let $T=\{v_1,v_2,v_3\}$ be a triangle containing $v$ with $v_1=v,$ and let $R(T)$ be its translation as shown in Figure \ref{Translation}. The segment connecting $(u_{v_1},u_{v_2},u_{v_3})$ and $(u_{R(v_1)},u_{R(v_2)},u_{R(v_3)})$ in $\mathbb{R}^3$ is given by
\[
f_T(t)=(u_{v_1}+(u_{R(v_1)}-u_{v_1})t,u_{v_2}+(u_{R(v_2)}-u_{v_2})t,u_{v_3}+(u_{R(v_3)}-u_{v_3})t),~\forall t\in[0,1].
\]
 \begin{figure}[htbp]
\centering
\includegraphics[scale=0.35]{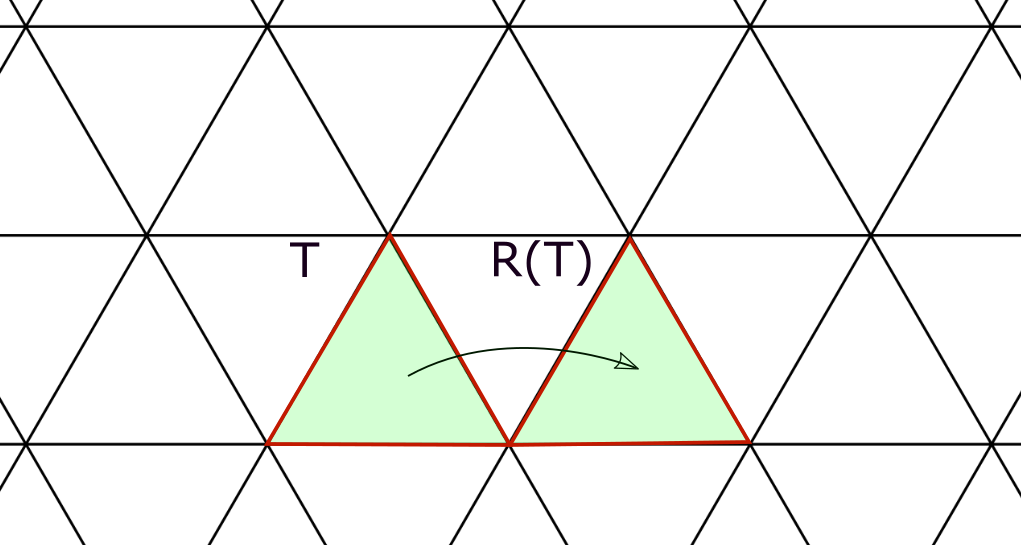}
\captionof{figure}{\small Triangles $T$ and $R(T)$.}
  \label{Translation}
\end{figure} 
Then by the Newton-Leibniz formula, for $\{i,j,k\}=\{1,2,3\}$, one has
\begin{align}\label{equ_difference}
&\theta_{R(v_i)}^{R(T)}-\theta_{v_i}^T=\int_0^1\nabla\theta_{v_i}(f_T(t))\cdot f'_T(t)\mathrm{d}t,\nonumber\\
&=(u_{R(v_i)}-u_{v_i})\int^1_0\pp{\theta_{v_i}(f_T(t))}{u_{v_i}}\mathrm{d}t
+(u_{R(v_j)}-u_{v_j})\int^1_0\pp{\theta_{v_i}(f_T(t))}{u_{v_j}}\mathrm{d}t\nonumber\\&+(u_{R(v_k)}-u_{v_k})\int^1_0\pp{\theta_{v_i}(f_T(t))}{u_{v_k}}\mathrm{d}t,\nonumber
\\&=D_1u_{v_i}\int^1_0\pp{\theta_{v_i}(f_T(t))}{u_{v_i}}\mathrm{d}t+D_1u_{v_j}\int^1_0\pp{\theta_{v_i}(f_T(t))}{u_{v_j}}\mathrm{d}t+D_1u_{v_k}\int^1_0\pp{\theta_{v_i}(f_T(t))}{u_{v_k}}\mathrm{d}t.
\end{align}
By the assertion (c) of Lemma \ref{vari}, the equation \eqref{equ_difference} can be written as \begin{align}\label{difference_normal}
    \theta_{R(v_i)}^{R(T)}-\theta_{v_i}=&\left(\int^1_0\pp{\theta_{v_i}(f_T(t))}{u_{v_j}}\mathrm{d}t\right)\left(D_1u_{v_j}-D_1u_{v_i}\right),\nonumber\\&+\left(\int^1_0\pp{\theta_{v_i}(f_T(t))}{u_{v_k}}\mathrm{d}t\right)(D_1u_{v_k}-D_1u_{v_i}).
\end{align}
For an edge $\{v_1,v_2\}\in E_{H},$ we denote by $F_{v_1,v_2}$ the set of faces that contain the edge $\{v_1,v_2\}.$ Note that $F_{v_1,v_2}$ consists of exactly two faces.
Since the equation \eqref{equation} holds at both $v$ and $R(v)$,
we obtain a linear equation of $D_1u$ at the vertex $v$ as follows
\begin{align}   0&=\sum_{T:T>R(v)}\theta_{R(v)}^T(u)-\sum_{T:T>v}\theta_v^T(u),\nonumber\\
&=\sum_{T:T>v}(\theta_{R(v)}^{R(T)}(u)-\theta_v^T(u)),\nonumber\\
&=\sum_{w:w\sim v}\eta_{vw}(D_1u_w-D_1u_v),\label{harmonic_deri1}
\end{align}
 where the last equation follows from \eqref{difference_normal}, and
\begin{align}\label{harmonic_weight}
    \eta_{vw}=\sum_{T\in F_{v,w}}\int^1_0\pp{\theta_{v}(f_T(t))}{u_w}\mathrm{d}t.
\end{align}
By the above observation, we have the key lemma.
\begin{lem}\label{bounded_weight}
Let $\mathcal{P}$ be a locally univalent circle packing on the complex plane $\mathbb{C}$ of $\mathcal{T}_H$. Let $u=(u_{m,n})_{m,n\in\mathbb{Z}}$ be discrete conformal factors of $\mathcal{P}$. Then $D_1u$ is a discrete harmonic function on the weighted graph $(\mathcal{G}_H,\eta),$ where $\mathcal{G}_H=(H,E_H)$ and
\[
\eta_{vw}=\sum_{T\in F_{v,w}}\int^1_0\pp{\theta_{v}(f_T(t))}{u_w}\mathrm{d}t,\forall v\sim w.
\]
Moreover, $\eta_{vw}=\eta_{wv}>0$ and
\[
\eta_{vw}<2,~\forall v\sim w.
\]
\end{lem}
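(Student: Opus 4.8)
The plan is to extract all four assertions from the computation already performed in \eqref{equ_difference}--\eqref{harmonic_weight}, using as black boxes the structural properties of the angle functions in Lemma~\ref{vari} and the estimate in Lemma~\ref{uniform}. The harmonicity of $D_1u$ is essentially the content of the chain of equalities \eqref{harmonic_deri1}, so I would just spell it out: the angle-sum equation \eqref{equation} holds simultaneously at $v$ and at $R(v)$; subtracting the two identities, re-indexing the sum over the faces $T>v$ via the bijection $T\mapsto R(T)$ (legitimate because $R$ is a simplicial automorphism of $\mathcal{T}_H$), and substituting the expression \eqref{difference_normal} for each term $\theta^{R(T)}_{R(v_i)}-\theta^{T}_{v_i}$ gives $\sum_{w\sim v}\eta_{vw}(D_1u_w-D_1u_v)=0$ for every $v\in H$, with $\eta_{vw}$ as in \eqref{harmonic_weight}. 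Collecting the contributions of the two faces incident to a fixed edge $\{v,w\}$ is exactly what produces the sum over $F_{v,w}$ in the definition of $\eta_{vw}$.

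For positivity, I would observe that for each face $T\in F_{v,w}$ the integrand $\partial\theta_v(f_T(t))/\partial u_w$ is strictly positive for every $t\in[0,1]$ by assertion (a) of Lemma~\ref{vari} (as $v\neq w$), so each of the two integrals, and hence $\eta_{vw}$, is positive. For the symmetry $\eta_{vw}=\eta_{wv}$, the key point is that the set of faces $F_{v,w}=F_{w,v}$ is common to both weights, and that the parametrized segment $f_T$ attached to a face $T=\{v_1,v_2,v_3\}$ --- the line segment in $\mathbb{R}^3$ from $(u_{v_1},u_{v_2},u_{v_3})$ to $(u_{R(v_1)},u_{R(v_2)},u_{R(v_3)})$ --- depends only on $T$ and on $u$, not on which vertex of $T$ has been singled out. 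Hence for each $T\in F_{v,w}$ the two integrands $\partial\theta_v(f_T(t))/\partial u_w$ and $\partial\theta_w(f_T(t))/\partial u_v$ are evaluated along the same curve and agree pointwise by assertion (b) of Lemma~\ref{vari}; summing over the two incident faces yields $\eta_{vw}=\eta_{wv}$.

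Finally, the upper bound $\eta_{vw}<2$ is immediate: every edge of the hexagonal triangulation lies in exactly two faces, so $F_{v,w}$ has exactly two elements, while Lemma~\ref{uniform} gives $\partial\theta_v(f_T(t))/\partial u_w<1$ uniformly in $t\in[0,1]$; thus each of the two integrals is strictly less than $1$ and their sum is strictly less than $2$. I do not anticipate a real obstacle, since all the analytic content is already carried by Lemmas~\ref{vari} and~\ref{uniform} and by the derivation \eqref{equ_difference}--\eqref{harmonic_weight}. The one place that needs care is the bookkeeping in the symmetry step: one must make sure the same integration path $f_T$ genuinely appears in both $\eta_{vw}$ and $\eta_{wv}$, which is why it is worth emphasizing that $f_T$ is an invariant of the face $T$ (not of a chosen base vertex) and that $R$ being a graph automorphism makes $T\mapsto R(T)$ a well-defined bijection on faces.
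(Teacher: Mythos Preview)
Your proposal is correct and follows essentially the same route as the paper's own proof: harmonicity from \eqref{harmonic_deri1}--\eqref{harmonic_weight}, positivity from Lemma~\ref{vari}(a), symmetry from Lemma~\ref{vari}(b) applied pointwise along the common path $f_T$, and the bound $\eta_{vw}<2$ from Lemma~\ref{uniform} together with $|F_{v,w}|=2$. The extra care you take in noting that $f_T$ depends only on the face $T$ (so the same integration path appears in both $\eta_{vw}$ and $\eta_{wv}$) and that $R$ is a simplicial automorphism is useful bookkeeping that the paper leaves implicit.
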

\begin{proof}
    By equations \eqref{harmonic_deri1} and \eqref{harmonic_weight}, $D_1u$ is a harmonic function on the weighted graph $(\mathcal{G}_H,\eta)$. And the symmetry of the weight $\eta$ follows from the assertion (b) of Lemma \ref{vari}, which yields that for each $v\sim w$ and $T\in F_{v,w},$
    \[
    \pp{\theta_v(f_T(t))}{u_w}=\pp{\theta_w(f_T(t))}{u_v},~\forall t\in[0,1].
    \]
    Moreover, by the assertion (a) of Lemma \ref{vari}, $\eta_{vw}>0,~\forall v\sim w.$
   The last assertion follows from Lemma \ref{uniform}. 
\end{proof}
\begin{figure}[!ht]
\centering
\includegraphics[scale=0.25]{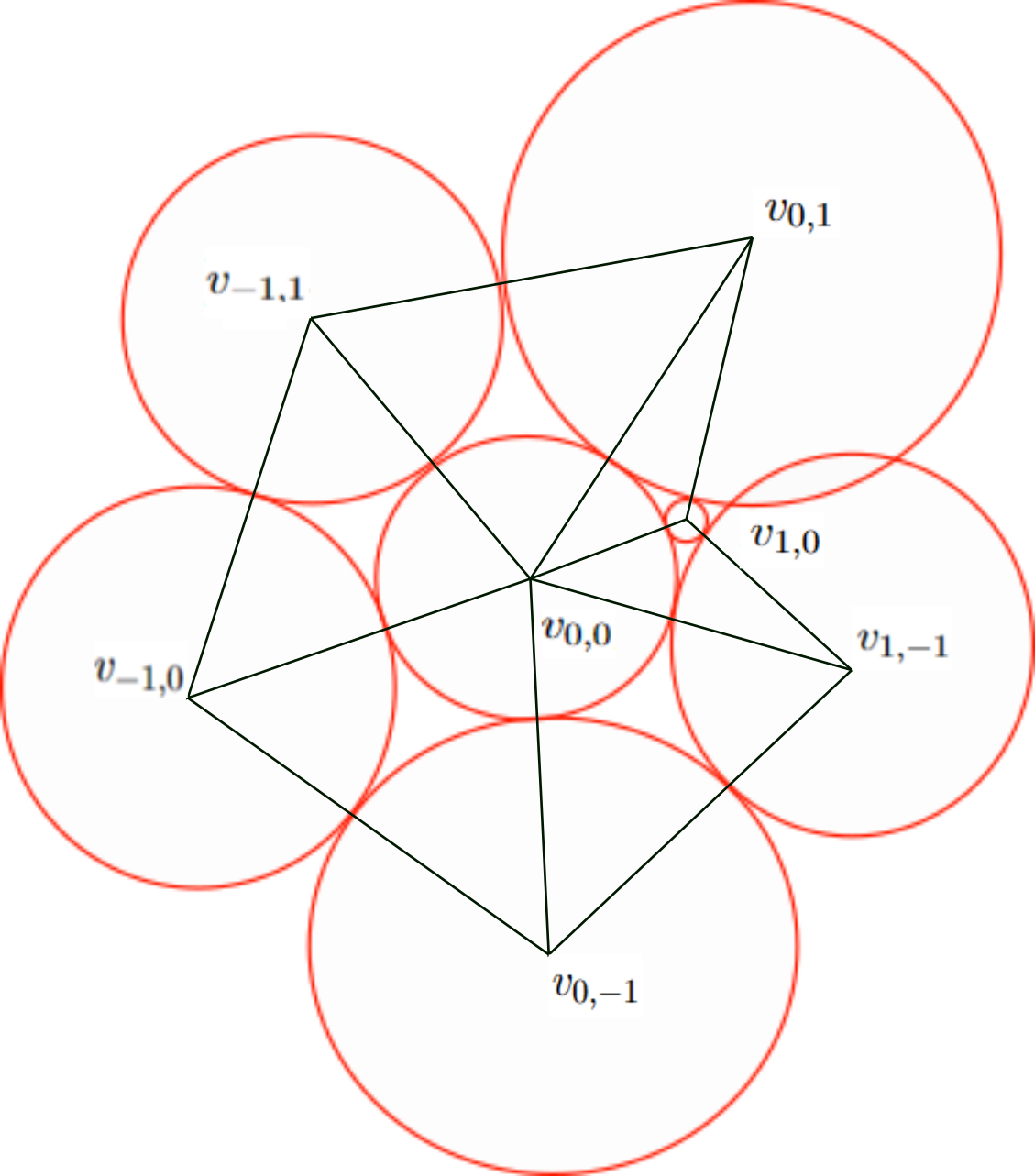}
\captionof{figure}{\small The flower $\mathcal{F}(v_{0,0})$.}
  \label{Flower}
\end{figure} 
Now we prove the main result.
\begin{proof}[Proof of Theorem \ref{main}]
By Lemma \ref{harmonic_lemma} and Lemma \ref{bounded_weight}, since $(\mathcal{G}_H,\eta)$ is a recurrent network and $D_1u$ is a discrete harmonic function that is bounded from below, $D_1u$ must be constant. Let $D_1u\equiv k_1$. Now we only need to prove that $D_2u$ is also constant. 

Without loss of generality, assuming that $u_{0,0}=0.$ Then $u_{m,0}=k_1m.$ Write $a_1:=u_{0,1}$ and $a_2:=u_{0,-1}$. Then $$u_{m,1}=a_1+mk_1,~ u_{m,-1}=a_2+mk_1.$$
Let $C_{m,n}$ denote the circle associated with the vertex $v_{m,n}$.
Now consider the flower of $v_{0,0}$, which consists of circles $$C_{0,0},C_{1,0},C_{0,1},C_{-1,1},C_{-1,0},C_{0,-1},C_{1,-1},$$ as shown in Figure \ref{Flower}.
Then radii of 
 those circles can be represented by parameters $a_1,a_2$ and $k_1.$ We now prove that $a_1=-a_2.$ In the case $a_1=-a_2$, the flower $\mathcal{F}(v_{0,0})$ can be regarded as a flower in a Doyle spiral $\mathcal{P}_{1,e^{k_1},e^{a_1}}$. Therefore the equation \eqref{equation} holds at $v_{0,0}.$
 We denote by $F_1,F_2$ and $F_3$ the faces associated to triples $$(v_{-1,0},v_{0,0},v_{0,-1}),(v_{0,-1},v_{0,0},v_{1,-1}), \ \mathrm{and}\  
 (v_{1,-1},v_{0,0},v_{1,0}).$$ Then by Lemma \ref{vari}, $\theta_{v_{0,0}}^{F_1},\theta_{v_{0,0}}^{F_2}$ and $\theta_{v_{0,0}}^{F_3}$ are monotonically increasing with respect to $a_2.$ Therefore, the equation \eqref{equation} holds at $v_{0,0}$ if and only if $a_2=-a_1$. Thus, we prove that $D_2u_{0,0}=D_2u_{0,-1}=a_1.$ By induction, one easily sees that $D_2u\equiv a_1$. Therefore, the proof is completed.
\end{proof}
\textbf{Acknowledgements.} 
B. Hua is supported by NSFC, no.12371056, and by Shanghai Science and Technology Program [Project No. 22JC1400100].

\bibliographystyle{plain}
\bibliography{reference}

\noindent Bobo Hua, bobohua@fudan.edu.cn\\[2pt]
\emph{School of Mathematical Sciences, LMNS,
Fudan University, Shanghai 200433, China; Shanghai Center for
Mathematical Sciences, Fudan University, Shanghai, 200438, China.}
\\

\noindent Puchun Zhou, pczhou22@m.fudan.edu.cn\\[2pt]
\emph{School of Mathematical Sciences, Fudan University, Shanghai, 200433, China.}
\end{document}